\newtheorem{thm}{Theorem}[section]
\newtheorem{lem}[thm]{Lemma}
\newtheorem{prop}[thm]{Proposition}
\newtheorem{conj}[thm]{Conjecture}
\theoremstyle{definition}
\crefname{equation}{equation}{equations}
\crefname{lem}{Lemma}{Lemmas}
\crefname{prop}{Proposition}{Propositions}
\crefname{claim}{Claim}{Claims}
\crefname{thm}{Theorem}{Theorems}
\crefname{conj}{Conjecture}{Conjectures}
\newlist{lemenum}{enumerate}{1}
\setlist[lemenum]{label=(\alph*), ref=\thelem(\alph*)}
\newcommand\ab[1]{\lvert#1\rvert}
\newcommand{\flo}[1]{\lfloor #1 \rfloor}
\newcommand\R{\mathbb{R}}
\title{Ramsey numbers upon vertex deletion}
\author{Yuval Wigderson\thanks{Institute for Theoretical Studies, ETH Z\"urich, 8092 Z\"urich, Switzerland. Email address: \texttt{yuval.wigderson@eth-its.ethz.ch}. Supported in part by NSF GRFP Grant DGE-1656518, ERC Consolidator Grants 863438 and 101044123, and NSF-BSF Grant 20196.}}
\date{}
\begin{document}
\maketitle
\begin{abstract}
	Given a graph $G$, its Ramsey number $r(G)$ is the minimum $N$ so that every two-coloring of $E(K_N)$ contains a monochromatic copy of $G$. It was conjectured by Conlon, Fox, and Sudakov that if one deletes a single vertex from $G$, the Ramsey number can change by at most a constant factor. We disprove this conjecture, exhibiting an infinite family of graphs such that deleting a single vertex from each decreases the Ramsey number by a super-constant factor.

	One consequence of this result is the following. There exists a family of graphs $\{G_n\}$ so that in any Ramsey coloring for $G_n$ (that is, a coloring of a clique on $r(G_n)-1$ vertices with no monochromatic copy of $G_n$), one of the color classes has density $o(1)$.
\end{abstract}
\section{Introduction}
The Ramsey number of a graph $G$ is the minimum $N$ so that every two-coloring of the edges of the complete graph $K_N$ contains a monochromatic copy of $G$.
Motivated by understanding the concentration of the Ramsey number of the Erd\H os--R\'enyi random graph $G(n,p)$, Conlon, Fox, and Sudakov \cite{MR4170438} made the following natural conjecture.
\begin{conj}[{\cite[Conjecture 5.1]{MR4170438}}]\label{conj:cfs}
	There exists a universal constant $C>0$ so that the following holds. Let $G$ be a graph, and let $H$ be obtained from $G$ by deleting a single vertex. Then
	\[
		r(G) \leq C\cdot r(H).
	\]
\end{conj}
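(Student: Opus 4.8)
The plan is to prove the inequality $r(G) \le C\cdot r(H)$ directly, by showing that every $2$-coloring of $K_N$ with $N = C\,r(H)$ contains a monochromatic copy of $G$. Write $G = H + v$, where $v$ is the deleted vertex, let $S = N_G(v) \subseteq V(H)$ be its neighborhood, and set $d = \ab{S} = \deg_G(v)$, $m = r(H)$, $N = Cm$, and $n = \ab{V(G)}$. The guiding idea is an \emph{extend-by-one-vertex} argument: since $N$ is a constant factor above the Ramsey threshold $m$, there should be not merely one but many monochromatic copies of $H$, and I would try to show that at least one of them extends to a monochromatic copy of $G$ — that is, that some red (say) copy of $H$ has its $S$-vertices sharing a common red neighbor outside the copy.

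First I would pass to a dominant color: by a majority argument the red graph (say) spans at least $\binom{N}{2}/2$ edges, and combined with Ramsey supersaturation — once $N \ge (1+\epsilon)m$ the number of monochromatic copies of $H$ should be $\Omega(N^{n-1})$ — I obtain $\Omega(N^{n-1})$ red copies $\phi_1,\dots,\phi_t$ of $H$ after discarding the minority color. A copy $\phi_i$ extends to a red $G$ precisely when there is a vertex $w \notin \phi_i(V(H))$ red-adjacent to all of $\phi_i(S)$, i.e.\ when $\codeg(\phi_i(S)) > 0$ in red (into the remaining vertices). So the whole problem reduces to controlling the red codegrees of the $d$-sets $\phi_i(S)$. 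To force an extension I would apply dependent random choice to the red graph: sample a small random set of vertices and pass to their common red neighborhood to obtain a large $U$ in which \emph{every} subset of size $d$ has many common red neighbors, then embed a red copy of $H$ inside $U$ with the $S$-vertices landing in $U$, and finally attach $v$ to a common red neighbor.

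The hard part — and the step I expect to be the genuine obstacle — is the attachment of $v$, and in particular the dependence on its degree $d$. Dependent random choice only guarantees the common-neighbor property for subsets up to a bounded threshold (one that grows only like the logarithm of the inverse density), so if $v$ has \emph{large} degree, comparable to $\ab{V(H)}$, the set $U$ one can produce is too weak and no extension is forced. Worse, embedding $H$ inside $U$ is itself a Ramsey problem, so there is no guarantee that the red copy of $H$ can be placed with its $S$-part inside a set of large red codegree; an adversary is free to color so that every red copy of $H$ sits on a vertex set whose $S$-part has essentially no common red neighbors, while simultaneously blocking blue copies of $G$ by a symmetric device. This is exactly the pressure point: the inequality hinges on a uniform bound relating $\codeg(\phi_i(S))$ to the abundance of monochromatic copies of $H$, and I do not see how to obtain such a bound once $\deg_G(v)$ is allowed to grow. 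I would therefore expect the argument to stall precisely here — and indeed this is the mechanism one would exploit to build a counterexample, taking $v$ to be a high-degree vertex whose attachment is forced to be globally expensive even though $H$ itself is easy to find monochromatically.
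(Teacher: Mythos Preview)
The statement you are attempting to prove is a \emph{conjecture that the paper disproves}. There is no proof in the paper to compare against; rather, \cref{thm:main} constructs explicit counterexamples, so any attempt to establish the inequality with a universal constant must fail.

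Your final paragraph is the honest part of the proposal: you correctly identify that the extend-by-one-vertex argument stalls when $\deg_G(v)$ is large, and you even name the mechanism for a counterexample --- take $v$ of high degree attached to a graph $H$ that is trivially easy to find monochromatically. This is precisely what the paper does. It sets $H = H_{k,n}$ to be $K_k$ together with $n-k$ isolated vertices (with $k = \lfloor\tfrac12\log n\rfloor$), so that $r(H) = n$ by Erd\H{o}s--Szekeres, and lets $G = G_{k,n}$ be $H$ plus an apex vertex adjacent to all of $H$. The Tur\'an coloring on $nk$ vertices --- $k$ red cliques of size $n$ each, blue between the parts --- has no red $G$ (since $G$ is connected on $n+1$ vertices) and no blue $G$ (since the blue graph is $k$-chromatic but $\chi(G)=k+1$), giving $r(G) > nk \geq \tfrac13 n\log n$. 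Hence $r(G)/r(H) = \Omega(\log n)$ and no universal $C$ exists.

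The earlier parts of your proposal --- supersaturation, dependent random choice, codegree control on $\phi_i(S)$ --- are therefore not merely incomplete but doomed in the stated generality. If you want to salvage the approach, it \emph{does} yield true bounds once $C$ is allowed to depend on $n$ or on structural parameters of $H$: this is the content of \cref{prop:degen-balance} and \cref{thm:sqrt-ub}, which give $r(G)\le 4d\cdot r(H)$ (with $d$ the degeneracy of $H$) and $r(G)\le O(\sqrt{n\log n})\cdot r(H)$, respectively.
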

Conlon, Fox, and Sudakov proved two natural weakenings of this conjecture. First, they showed that \cref{conj:cfs} holds when $G$ is a dense graph, i.e.\ when it has $n$ vertices and at least $p\binom n2$ edges, and $C$ is allowed to depend on $p$. They also proved a slightly weaker inequality, namely that for any $(n+1)$-vertex graph $G$ and any graph $H$ obtained by deleting a single vertex from $G$, one has
\begin{equation}\label{eq:cfs-bound}
	r(G) \leq 2 n\cdot r(H).
\end{equation}

Our goal in this paper is to disprove \cref{conj:cfs}. We find an explicit family of graphs such that deleting a single vertex decreases the Ramsey number by a super-constant factor. Here, and in the rest of the paper, all logarithms have base $2$.
\begin{thm}\label{thm:main}
	For any $n\geq 16$, there exists an $(n+1)$-vertex graph $G$ with Ramsey number $r(G) > \frac 13 n\log n$. However, there is a vertex of $G$ whose deletion yields a graph $H$ with Ramsey number $r(H) = n$. In particular, $r(G) = \omega(r(H))$.
\end{thm}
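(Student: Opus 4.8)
The plan is to let $G$ be a clique of order about $\tfrac13\log n$ with all remaining vertices attached as pendant leaves to a single clique vertex $u$, and then to delete $u$. Precisely, set $k:=\lceil\tfrac13\log n\rceil+1$; take a clique $K_k$ on a vertex set $\{u\}\cup A$ with $|A|=k-1$; and add a set $B$ of $n+1-k$ further vertices, each joined to $u$ only. I would first sanity-check the parameters: for $n\ge 16$ one has $\log n\ge 4$, so $k\ge 3$ and $k\le\tfrac13\log n+2$, whence $|B|=n+1-k\ge 1$ and $|V(G)|=n+1$. Observe that $G$ is connected (every vertex either lies in the clique or sees $u$) and that $\chi(G)=k$, since $G\supseteq K_k$ and colouring $\{u\}\cup A$ with all $k$ colours and all of $B$ with the colour missing at $u$ is a proper $k$-colouring.

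For the lower bound on $r(G)$ I would use the Chv\'atal--Harary colouring. Let $N:=(k-1)n$, partition $V(K_N)$ into $k-1$ blocks of size $n$, and colour an edge red if its endpoints lie in a common block, blue otherwise. The red graph is a vertex-disjoint union of copies of $K_n$, so each red component has only $n<n+1=|V(G)|$ vertices and cannot contain the connected graph $G$. The blue graph is complete $(k-1)$-partite, hence $(k-1)$-colourable, so it cannot contain $G$ because $\chi(G)=k$. Therefore $r(G)\ge N+1=(k-1)n+1$, and since $k-1\ge\tfrac13\log n$ this gives $r(G)>\tfrac13 n\log n$.

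Next I would identify $H$. Deleting $u$ destroys every $u$--$B$ edge and leaves $A$ spanning $K_{k-1}$, so $H:=G-u=K_{k-1}\cup(n+1-k)K_1$, a graph on exactly $n$ vertices; hence $r(H)\ge n$ trivially. For the reverse inequality: in any two-colouring of $E(K_n)$, provided $n\ge r(K_{k-1})$ there is a set $S$ of $k-1$ vertices spanning a monochromatic $K_{k-1}$, and the remaining $n-(k-1)=n+1-k$ vertices then serve as the isolated vertices of $H$, producing a monochromatic copy of $H$. So $r(H)=n$ as soon as $n\ge r(K_{k-1})$, and then $r(G)/r(H)>\tfrac13\log n\to\infty$, i.e.\ $r(G)=\omega(r(H))$.

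The one quantitative point requiring care --- and the only place the tension between $G$ being expensive and $H$ being cheap enters --- is the inequality $n\ge r(K_{k-1})$. I would verify it via the classical bound $r(K_s)\le\binom{2s-2}{s-1}\le 4^{s-1}$, which gives $r(K_{k-1})\le 4^{k-2}=2^{2\lceil\frac13\log n\rceil-2}\le 2^{\frac23\log n}=n^{2/3}\le n$ for every $n\ge 1$ (the base case $n=16$ being merely $r(K_2)=2\le 16$). I do not anticipate a genuine obstacle beyond this: essentially all the content is in choosing the construction --- a connected graph on $n+1$ vertices with chromatic number $\Theta(\log n)$ possessing a cut vertex whose removal leaves nothing but a clique of order $\Theta(\log n)$ (which is cheap precisely because $r(K_{\Theta(\log n)})=n^{o(1)}\ll n$) together with isolated vertices. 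The residual care is bookkeeping: taking the clique order to be $\lceil\tfrac13\log n\rceil+1$ (not the floor) so that $k-1\ge\tfrac13\log n$ and the bound $r(G)>\tfrac13 n\log n$ is strict; the constant $\tfrac13$ is not tight, and one could push the clique order towards $\tfrac12\log n$ with the same estimate, or nearer $\log n$ using sharper bounds on $r(K_s)$.
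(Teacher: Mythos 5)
Your proposal is correct and is essentially identical to the paper's proof: your graph $G$ (a $K_k$ with pendants attached to a single clique vertex $u$) is exactly the paper's construction of an apex vertex over a clique plus isolated vertices, and both the Tur\'an-type coloring for the lower bound (connectivity blocks the red colour, chromatic number blocks the blue) and the Erd\H os--Szekeres bound giving $r(H)=n$ match the paper's argument. The only difference is the bookkeeping choice of clique order ($\lceil\frac13\log n\rceil+1$ versus the paper's $\lfloor\frac12\log n\rfloor+1$), which affects nothing but constants.
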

In the other direction, we prove the following strengthening of (\ref{eq:cfs-bound}).
\begin{thm}\label{thm:sqrt-ub}
	Let $G$ be an $(n+1)$-vertex graph, and suppose that $H$ is obtained from $G$ by deleting one vertex. Then 
	\[
		r(G) \leq C\sqrt{n \log n}\cdot r(H)
	\]
	for some absolute constant $C>0$. 
\end{thm}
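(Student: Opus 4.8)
The plan is to fix a red/blue colouring of $K_N$ with no monochromatic $G$ and to push it, via structural arguments, down to $N = O(\sqrt{n\log n}\cdot r(H))$. Write $r=r(H)$, let $v$ be the deleted vertex with $S=N_G(v)$ and $d=|S|$, and observe that $G$ is a spanning subgraph of the join $K_1+H$: concretely, if any monochromatic copy of $H$ of colour $c$ has a vertex (outside it) joined to its $S$‑part in colour $c$, then we have a monochromatic $G$.

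First I would extract what "no monochromatic $G$'' gives. Since $G\subseteq K_1+H$, no red neighbourhood $R_x$ contains a red copy of $H$, and no blue neighbourhood $B_x$ contains a blue copy of $H$; more usefully, for \emph{any} two vertices $x,z$ one has $|R_x\cap B_z|<r$, since $r$ vertices would contain a monochromatic $H$, which extends to a red $G$ through $x$ if it is red and to a blue $G$ through $z$ if it is blue. Two consequences: all red neighbourhoods lie pairwise within symmetric difference $<2r$; and summing $|R_x\cap B_z|<r$ over ordered pairs gives $\sum_y|R_y|\,|B_y|<N^2r$, whence the red graph is almost regular and $\min_y\{|R_y|,|B_y|\}=O(r)$. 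Combining these, one of the two colour classes — say red — has maximum degree $O(r)$; equivalently the blue graph is $K_N$ minus a graph of maximum degree $O(r)$, and it contains no blue $G$. (If $N=O(r)$ we are already done, so assume $N$ is large enough for these averaging steps.)

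The quantitative heart of the proof is to show that such an almost‑complete blue graph cannot avoid $G$ once $N=\Omega(\sqrt{n\log n}\cdot r)$. Embedding $G$ greedily along a degeneracy ordering only forces $N=O(\delta(G)\cdot r)=O(n r)$ — recovering \eqref{eq:cfs-bound} — so one cannot afford to embed all of $G$ by hand; the point is to use $r(H)=r$ to produce the bulk of $H$ rather than building it greedily. I would place the apex $v$ and its neighbourhood $S$ directly (the almost‑complete blue graph has common neighbourhoods of size $N-O(r)$, which handles $S$), reserve a block of $\Theta(r)$ vertices in which the Ramsey property of $H$ is invoked, and arrange — this is the delicate colour bookkeeping — that the monochromatic $H$ the Ramsey property returns has the colour matching the vertex chosen for $v$. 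Optimising the split between the hand‑built part and the Ramsey reservoir is what produces the exponent: a crude split loses a further factor $\sqrt{n/\log n}$, and balancing it yields $\sqrt{n\log n}$. A short case analysis on $d$ — iterating the $R_x\cap B_z$ trick when $d$ is small, and exploiting that $G$ is within few edges of $K_1+H$ when $d$ is large — then covers all graphs.

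The step I expect to be the real obstacle is exactly this last one: closing the gap between the elementary $nr$ bound and the target $\sqrt{n\log n}\cdot r$ by making the two colours cooperate. Neighbourhoods come in two flavours, red‑$H$‑free and blue‑$H$‑free, and any construction must be robust to whichever obstruction shows up where it is needed; this is precisely why naive greedy embeddings and crude pigeonholing are wasteful. As a sanity check, specialising to $G=K_{n+1}$, $H=K_n$ the statement reads $r(K_{n+1})\le C\sqrt{n\log n}\,r(K_n)$, which does not seem to follow from the Erd\H os--Szekeres recursion, so the argument must be genuinely new even there and must not tacitly assume, e.g., that $r(a,b)$ is maximised on the diagonal for fixed $a+b$.
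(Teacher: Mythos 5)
Your opening move is the same as the paper's: the observation that for any two vertices $x,z$, the set $N_R(x)\cap N_B(z)$ contains no monochromatic copy of $H$ (a red $H$ there extends to a red $G$ through $x$, a blue one to a blue $G$ through $z$), hence has size at most $r(H)-1$. Your averaging consequences of this are essentially fine, and the resulting reduction to ``one colour class has maximum degree $O(r(H))$'' is a legitimate (and somewhat different) way to recover the easy bounds: a greedy embedding of $G$ into the nearly complete colour then gives $r(G)=O(d\cdot r(H))$ with $d$ the degeneracy, i.e.\ roughly the paper's first bound \eqref{eq:first-bd}, and in the worst case only the $O(n\cdot r(H))$ bound \eqref{eq:cfs-bound}. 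Up to this point you and the paper are on the same track (the paper works locally, inside $S=N_R(v)$ with $w$ of maximum blue degree in $S$ and $T=N_B(w)\cap S$, rather than via a global regularity reduction, but the content is comparable).

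The gap is exactly where you say you expect it: everything beyond the linear-in-$n$ bound. Your plan---reserve a $\Theta(r)$-block, invoke $r(H)$ there, and ``arrange that the monochromatic $H$ has the colour matching the vertex chosen for $v$''---has no mechanism for that arranging; the whole difficulty is that the Ramsey property may hand you $H$ in the colour that is sparse at your chosen apex, and your sketch offers nothing to rule this out. Likewise, ``optimising the split yields $\sqrt{n\log n}$'' is asserted without any identified trade-off that produces this exponent. The paper's resolution is a dichotomy in the degeneracy $d$ of $H$, proved as \cref{prop:degen-balance}: either $T$ is large compared to $(\ab S-n)/d$, in which case \cref{lem:greedy} embeds $H$ greedily in red inside $S$ and attaches $v$, giving \eqref{eq:first-bd}; or $T$ is smaller than $\varepsilon(\ab S -1)$ with $\varepsilon\approx d/(n\log\frac nd)$, in which case the colouring on $S$ is $\varepsilon$-unbalanced and the Erd\H{o}s--Szemer\'edi theorem (\cref{lem:erdos-szemeredi}) produces a monochromatic clique of order $\frac{a}{\varepsilon\log(1/\varepsilon)}\log\ab S$; crucially, the lower bound $r(G)\geq 2^{d/2}$ (\cref{lem:exp-degen}) forces $\log\ab S=\Omega(d)$, so this clique has order $\geq 2n$ and swallows $G$ outright, yielding \eqref{eq:second-bd}. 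Balancing $4d$ against $A\frac nd\log\frac nd$ over $d$ is what gives $\sqrt{n\log n}$. None of these three ingredients (the degeneracy dichotomy, the unbalanced-colouring clique theorem, the exponential-in-degeneracy lower bound) appears in your proposal, so the quantitative heart of the theorem is missing. Incidentally, your clique sanity check is answered by exactly this mechanism: when $d=\Theta(n)$ the relevant neighbourhood colouring is either $\Omega(1)$-balanced, giving $r(G)=O(r(H))$ directly, or so unbalanced that \cref{lem:erdos-szemeredi} combined with $\log N=\Omega(n)$ yields a monochromatic $K_{n+1}$; no new Erd\H{o}s--Szekeres-type recursion is needed.
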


Recall that the edge density of an $N$-vertex graph is its number of edges divided by $\binom N2$. For $\varepsilon \in (0,\frac 12]$, let us say that a two-coloring of $E(K_N)$ is \emph{$\varepsilon$-balanced} if both color classes have edge density at least $\varepsilon$. Ramsey properties of $\varepsilon$-balanced colorings have been well studied, see e.g.\ \cite{MR325446,MR2455594,MR2433767,MR4267031,MR4625871}.
A \emph{Ramsey coloring} for a graph $G$ is a two-coloring of $E(K_N)$, where $N=r(G)-1$, which contains no monochromatic copy of $G$. In general, there may be multiple non-isomorphic Ramsey colorings for $G$. For $\varepsilon \in (0,\frac 12]$, let us say that $G$ is \emph{$\varepsilon$-Ramsey-balanced} if there exists a Ramsey coloring for $G$ which is $\varepsilon$-balanced.

Our next result shows that for Ramsey-balanced graphs $G$, \cref{conj:cfs} is true.
\begin{prop}\label{prop:ramsey-balanced}
	Let $\varepsilon \in (0,\frac 12]$, let $G$ be an $\varepsilon$-Ramsey-balanced graph, and let $H$ be obtained from $G$ by deleting one vertex. Then
	\[
		r(G) \leq \frac{4}{\varepsilon}\cdot r(H).
	\]
\end{prop}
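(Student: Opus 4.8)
The plan is to leverage the $\varepsilon$-balanced Ramsey coloring of $K_N$ (with $N = r(G)-1$) to build a large coloring with no monochromatic copy of $H$, thereby lower-bounding $r(H)$ in terms of $N$. The key idea: if $\chi$ is an $\varepsilon$-balanced two-coloring of $E(K_N)$ with no monochromatic $G$, then for \emph{every} vertex $v$ of $K_N$, the coloring restricted to the link of $v$ in either color cannot contain a monochromatic $H$ in that same color --- because appending $v$ to such a copy of $H$ would, after matching up the deleted vertex, produce a monochromatic $G$. More precisely, write $G = H \cup \{x\}$ where $x$ is the special vertex and let $S = N_G(x)$ be its neighborhood, with $s = |S| \le n$. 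If the red neighborhood $N_{\mathrm{red}}(v)$ of some vertex $v$ contains a red copy of $H$ in which the image of $S$ lies entirely inside $N_{\mathrm{red}}(v)$, then that red $H$ together with $v$ forms a red $G$. So it suffices to find a vertex $v$ whose red link is large and a red copy of $H$ inside it; but we must be careful to embed $S$ into the common red neighborhood.

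First I would pick the color — say red — with the larger density, so red has density at least $\tfrac12 \ge \varepsilon$; actually we only need density at least $\varepsilon$ in \emph{some} color, so fix whichever color, call it red, has density $\ge \varepsilon$. The number of red edges is at least $\varepsilon\binom N2$, so the average red degree is at least $\varepsilon(N-1) \ge \tfrac{\varepsilon}{2}N$ (for $N \ge 2$), hence some vertex $v$ has red degree $d \ge \tfrac{\varepsilon}{2}N$. Let $W = N_{\mathrm{red}}(v)$, so $|W| = d$. Now consider the red graph induced on $W$. I claim it contains no red copy of $H$: if it did, then since every vertex of $W$ is joined to $v$ in red, this red $H$ together with $v$ would give a red copy of $G$ in $\chi$ (embedding the special vertex $x$ of $G$ to $v$ — note $x$'s neighbors in $G$ form $S \subseteq V(H)$, all of which land in $W$, and $W$ is entirely red-adjacent to $v$). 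This contradicts that $\chi$ has no monochromatic $G$. Therefore $\chi$ restricted to $W$ is a two-coloring of $K_d$ with no red copy of $H$ — in particular, no monochromatic copy of $H$ would follow if we also knew no blue $H$, but we don't; so instead I argue directly: $\chi|_W$ is a coloring of $K_d$ avoiding red $H$, so... hmm, this only controls one color.

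The fix is to iterate / use both directions. Here is the cleaner argument: from the $\varepsilon$-balanced coloring $\chi$ of $K_N$, in the majority color (density $\ge \tfrac12$) find a vertex $v$ of red degree $d \ge \tfrac12(N-1) \ge \tfrac{1}{4}N$; as above, $\chi|_{N_{\mathrm{red}}(v)}$ has no red $H$. Separately, in the minority color — which still has density $\ge \varepsilon$ — find a vertex $u$ with blue degree $d' \ge \varepsilon(N-1) \ge \tfrac{\varepsilon}{2}N$, and $\chi|_{N_{\mathrm{blue}}(u)}$ has no blue $H$. Neither of these alone avoids \emph{monochromatic} $H$. Instead, restrict to $A = N_{\mathrm{red}}(v)$: inside $A$, $\chi$ has no red $H$, so $\chi|_A$ is a coloring of $K_{|A|}$; a monochromatic $H$ in $\chi|_A$ must be blue. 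Now within $A$, if the blue density is at least, say, $\tfrac12$... this still might not terminate. The genuinely correct and simple approach: take the color of density $\ge\varepsilon$, get vertex $v$ with that-color-degree $\ge \varepsilon(N-1)/1$... I realize the intended proof is: $r(H) \ge |W|+1$ is false since $W$ only avoids one color. So the real argument uses that $W$ avoids red $H$, meaning inside $W$, \emph{red has no $H$}; apply the bound $r(G)\le 2n\cdot r(H)$ flavor locally, or simply note that a $K_{|W|}$ with no red $H$ has $|W| < R(H, K_{?})$...

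The step I expect to be the main obstacle is exactly this: passing from "one color class of a sub-clique avoids $H$" to a genuine lower bound on $r(H)$. The resolution I would pursue: inside $W = N_{\mathrm{red}}(v)$ with $|W| = d \ge \tfrac{\varepsilon}{2}N$, the coloring $\chi|_W$ has no red $H$; take the sub-coloring, and now look at its blue graph — if it had no blue $H$ either, then $d < r(H)$ and we'd get $\tfrac{\varepsilon}{2}N < r(H)$, i.e. $r(G) = N+1 \le \tfrac{2}{\varepsilon}r(H) + 1 \le \tfrac{4}{\varepsilon}r(H)$, as desired. So the only bad case is that $\chi|_W$ contains a blue $H$; but then apply the \emph{same} argument with the roles of the colors inside $W$: find a vertex $w \in W$ whose blue degree within $W$ is large, namely $\ge \tfrac12(|W|-1)$, since a blue $H$ exists forces... no. Cleanest: simply take $v$ to be a vertex of maximum degree in the color of density $\ge\varepsilon$, so that color-degree $\ge \varepsilon(N-1)$ at $v$; then $W$ has no monochromatic $H$ in that color, and we recurse into $W$ \emph{in the other color}, halving $\varepsilon$ to $1/2$ and losing only a constant. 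After at most one more step we reach a sub-clique with no monochromatic $H$ at all, of size $\ge \tfrac{\varepsilon}{2}\cdot\tfrac12 N = \tfrac{\varepsilon}{4}N$, giving $N \le \tfrac{4}{\varepsilon}r(H)$ and hence the claim. I would write this up as: one application of the link argument in the $\ge\varepsilon$-dense color, one application in the other color inside the link (which has density $\ge \tfrac12$ there by maximality or by a second averaging), and then the resulting clique is $H$-free in both colors.
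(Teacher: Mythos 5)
Your write-up correctly identifies the crux --- a single link $W=N_{\mathrm{red}}(v)$ only excludes $H$ in one color --- but the fix you settle on does not work. After restricting to $W$, you assert that the other color has density at least $\tfrac12$ there (``halving $\varepsilon$ to $1/2$ \dots by maximality or by a second averaging''), so that a second link step inside $W$ loses only a constant factor. Neither justification holds: the $\varepsilon$-balancedness is a hypothesis on the \emph{global} coloring and says nothing about the induced coloring on $W$, and maximality of $v$'s red degree does not control densities inside its neighborhood. Concretely, in a Tur\'an-type coloring (blocks red inside, blue between, as in Section 2) the red density is about $1/k$, so the coloring is $\Theta(1/k)$-balanced, yet the red neighborhood of any vertex is entirely red: the blue density inside $W$ is $0$, and your second step produces nothing. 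More generally, the blue density inside $W$ can be as small as roughly $1/n$ without creating a red copy of $H$, in which case your recursion only yields a set of size about $\varepsilon N/n$ free of monochromatic $H$, giving a bound of order $(n/\varepsilon)\,r(H)$ --- no better than the trivial bound $r(G)\le 2n\,r(H)$, and far from the claimed $\tfrac4\varepsilon r(H)$.

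The missing ingredient is a \emph{simultaneous} statement: one must find two vertices $v,w$ at once with $\lvert N_R(v)\cap N_B(w)\rvert \ge \tfrac{\varepsilon}{4}(N-1)$; then $S=N_R(v)\cap N_B(w)$ contains no red $H$ (append $v$) and no blue $H$ (append $w$), so $\lvert S\rvert\le r(H)-1$ and the bound follows in one line. This is exactly Lemma \ref{cor:common-nbhd} in the paper, and it is not a formality: it is proved by counting bicolored cherries (copies of $K_{1,2}$ with one red and one blue edge), using a reverse Jensen inequality to bound $\sum_u \lvert N_R(u)\rvert^2$ after first disposing of the case where some vertex has red degree at least $\tfrac32\varepsilon(N-1)$. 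Your sequential ``pick $v$, then pick $w$ inside $N_R(v)$'' scheme cannot recover this, precisely because balancedness does not pass to induced subcolorings; so as written the proposal has a genuine gap at the step you yourself flagged as the main obstacle.
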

\cref{prop:ramsey-balanced} shows that \cref{conj:cfs} is true for any graph $G$ which is $\Omega(1)$-Ramsey-balanced. Combined with \cref{thm:main}, we conclude that for any fixed $\varepsilon>0$, there exist graphs which are \emph{not} $\varepsilon$-Ramsey-balanced. This is somewhat surprising, as one might naively expect all graphs to be Ramsey-balanced: since we are searching for the same graph in both the red and blue colors, it seems plausible that any extremal coloring should have roughly equal red and blue densities, or at least that these densities would be within a constant factor of one another.

The rest of this paper is organized as follows. In \cref{sec:main-proof}, we exhibit explicit graphs in which the removal of one vertex changes the Ramsey number by a super-constant factor, proving \cref{thm:main}. In \cref{sec:upper-bounds}, we prove \cref{thm:sqrt-ub}, establishing a stronger upper bound than (\ref{eq:cfs-bound}). In \cref{sec:balanced}, we prove \cref{prop:ramsey-balanced}, showing that \cref{conj:cfs} is true for Ramsey-balanced graphs. In \cref{sec:multicolor}, we make some comments on the multicolor version of these problems. We end with some concluding remarks and open problems in \cref{sec:conclusion}. For clarity of presentation, we systematically omit floor and ceiling signs whenever they are not crucial.

\section{Proof of Theorem \ref{thm:main}}\label{sec:main-proof}
Let $k$ and $n \geq 4^k$ be positive integers. Let $H_{k,n}$ be the $n$-vertex graph consisting of the clique $K_k$, plus $n -k$ isolated vertices. Additionally, let $G_{k,n}$ be obtained from $H_{k,n}$ by adding an apex vertex, i.e.\ a vertex adjacent to all vertices of $H_{k,n}$. We note for future reference that $G_{k,n}$ is connected and has chromatic number $k+1$. \cref{thm:main} follows from the following two lemmas, both of which have simple proofs using standard techniques, which we give after stating the lemmas.
\begin{lem}\label{lem:H-ub}
	$r(H_{k,n})=n$ if $n \geq 4^k$.
\end{lem}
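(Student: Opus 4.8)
The plan is to establish the two inequalities $r(H_{k,n}) \ge n$ and $r(H_{k,n}) \le n$ separately. The lower bound is immediate from a counting argument: $H_{k,n}$ has $n$ vertices, so a two-coloring of $E(K_{n-1})$ cannot contain any copy of $H_{k,n}$, monochromatic or otherwise, simply because there are not enough vertices to host it. Hence $r(H_{k,n}) \ge n$, and this requires no hypothesis on $n$.

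For the upper bound I would exploit the fact that the only edges of $H_{k,n}$ lie inside its clique $K_k$: the remaining $n-k$ vertices are isolated and therefore place no constraint on an embedding. Concretely, to find a monochromatic copy of $H_{k,n}$ in a two-coloring of $E(K_N)$ it suffices to (i) exhibit $k$ vertices spanning a monochromatic $K_k$, and then (ii) assign the $n-k$ isolated vertices of $H_{k,n}$ to any $n-k$ of the remaining host vertices, which is possible as long as $N \ge n$. When $N = n$, step (ii) is automatic once step (i) is carried out, so the entire task reduces to guaranteeing a monochromatic $K_k$ in every two-coloring of $E(K_n)$, i.e.\ to the inequality $r(K_k) \le n$.

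To close the argument I would invoke the classical Erd\H os--Szekeres bound $r(K_k) \le \binom{2k-2}{k-1} \le 2^{2k-2} = 4^{k-1} \le 4^k$, which under the hypothesis $n \ge 4^k$ gives $r(K_k) \le n$, as needed. Combining this with the lower bound yields $r(H_{k,n}) = n$.

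I do not expect any genuine obstacle here: the only idea is the observation that isolated vertices of the target graph are "free" (costing one host vertex each but imposing no colour constraint), which collapses the problem to the diagonal clique Ramsey number, and the standard exponential bound on $r(K_k)$ is comfortably within the assumed range $n \ge 4^k$. The mild care needed is just to phrase the embedding of the isolated vertices correctly and to keep track of the fact that exactly $n$ host vertices suffice.
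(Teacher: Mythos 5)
Your proof is correct and follows essentially the same route as the paper: the lower bound is immediate from $H_{k,n}$ having $n$ vertices, and the upper bound finds a monochromatic $K_k$ via the Erd\H os--Szekeres bound $r(K_k)\leq 4^k\leq n$ and absorbs the isolated vertices into the remaining $n-k$ host vertices. No issues to flag.
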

\begin{lem}\label{lem:G-lb}
	$r(G_{k,n}) > nk$.
\end{lem}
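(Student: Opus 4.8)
The plan is to prove the slightly stronger statement $r(G_{k,n}) \geq nk+1$ via the standard Chvátal--Harary-type lower bound for connected graphs, exploiting the two structural features of $G_{k,n}$ already recorded in the excerpt: it is connected, it has $n+1$ vertices, and it has chromatic number $k+1$. Concretely, I would exhibit a red/blue coloring of $E(K_N)$ with $N=nk$ having no monochromatic copy of $G_{k,n}$, which immediately gives $r(G_{k,n}) > nk$.

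First I would fix the coloring: partition $V(K_{nk})$ into $k$ blocks $V_1,\dots,V_k$, each of size $n$, color an edge red if its two endpoints lie in a common block, and blue otherwise. For the red side, the red graph is the vertex-disjoint union of $k$ copies of $K_n$, so each of its connected subgraphs spans at most $n$ vertices; since $G_{k,n}$ is connected and has $n+1$ vertices, it admits no red copy. For the blue side, the blue graph is the complete $k$-partite graph with parts $V_1,\dots,V_k$, which is properly $k$-colorable; as every subgraph of a $k$-colorable graph is $k$-colorable while $\chi(G_{k,n})=k+1$, there is no blue copy of $G_{k,n}$ either. Hence no monochromatic $G_{k,n}$ appears, and $r(G_{k,n}) \geq nk+1 > nk$.

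There is no serious obstacle here — the argument is the textbook Ramsey lower bound via chromatic number and largest component. The only points needing a sentence of justification are the two structural claims about $G_{k,n}$: connectivity holds because the apex vertex is joined to every vertex of $H_{k,n}$, and $\chi(G_{k,n})=k+1$ holds because a proper coloring must use $k$ colors on the clique $K_k\subseteq H_{k,n}$ and a fresh color on the apex vertex (while $k$ colors suffice, extending any $k$-coloring of $K_k$ to the isolated vertices and the apex). Both of these are essentially immediate, so the proof is short.
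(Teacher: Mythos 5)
Your proof is correct and is essentially the paper's own argument: the same Tur\'an-type coloring of $K_{nk}$ into $k$ blocks of size $n$ (red inside blocks, blue between), ruling out a red copy via connectivity and the $(n+1)$-vertex count, and a blue copy via $\chi(G_{k,n})=k+1$ versus the $k$-chromatic blue graph. One trivial slip in your final parenthetical: the upper bound should say $k+1$ colors suffice for $G_{k,n}$ (not $k$), though only the lower bound $\chi(G_{k,n})\geq k+1$ is actually used in the argument.
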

In particular, given $n \geq 16$, let $k = \flo{\frac 12 \log n}$, so that $4^k \leq n < 4^{k+1}$. Then $r(G_{k,n}) > nk \geq \frac 13 n \log n$ since $n \geq 16$. This shows that \cref{thm:main} follows from \cref{lem:H-ub,lem:G-lb}.

\begin{proof}[Proof of \cref{lem:H-ub}]
	The lower bound $r(H_{k,n})\geq n$ is immediate since every $n$-vertex graph has Ramsey number at least $n$. For the upper bound, fix a two-coloring of $E(K_n)$. Erd\H os and Szekeres~\cite{MR1556929} proved that $r(K_k) \leq 4^k \leq n$, so this coloring contains a monochromatic $K_k$. Together with the remaining $n-k$ vertices, we obtain a monochromatic copy of $H_{k,n}$.
\end{proof}
\begin{proof}[Proof of \cref{lem:G-lb}]
	Let $N = nk$, and consider the \emph{Tur\'an coloring} of $E(K_N)$. Namely, we equitably partition $V(K_N)$ into sets $S_1,\dots,S_k$, each of order $n$. We color all edges inside some $S_i$ red, and all edges between $S_i$ and $S_j$ blue, for all $i \neq j$. Since $G_{k,n}$ is connected and has $n+1$ vertices, there can be no monochromatic red copy of $G_{k,n}$, as the connected components of the red graph in this coloring each have $n$ vertices. The blue graph, on the other hand, has chromatic number $k$, and $G_{k,n}$ has chromatic number $k+1$, so there can be no monochromatic blue copy of $G_{k,n}$ either. This shows that $r(G_{k,n})>nk$, as claimed.
\end{proof}

\section{Upper bounds}\label{sec:upper-bounds}

A graph $H$ is called \emph{$d$-degenerate} if every subgraph of $H$ has a vertex of degree at most $d$. Equivalently, $H$ is $d$-degenerate if one can order its vertices so that every vertex has at most $d$ neighbors which precede it in the ordering. The \emph{degeneracy} of $H$ is defined as the minimum $d$ so that $H$ is $d$-degenerate. 

\cref{thm:sqrt-ub} follows from the following result, which gives two bounds on $r(G)/r(H)$ when $H$ is obtained from $G$ by deleting a vertex; the first is strong when $H$ is sparse (i.e.\ has low degeneracy), while the second is stronger when $H$ is dense (i.e.\ has high degeneracy). Balancing the two bounds, we obtain \cref{thm:sqrt-ub}.

\begin{prop}\label{prop:degen-balance}
	Let $H$ be obtained from $G$ by deleting a vertex. If $H$ has $n$ vertices and degeneracy $d\geq 1$, then
	\begin{equation}\label{eq:first-bd}
		r(G) \leq 4 d r(H),
	\end{equation}
	and
	\begin{equation}\label{eq:second-bd}
		r(G)\leq \left(A \frac nd \log \frac nd\right) r(H)
	\end{equation}
	where $A>0$ is an absolute constant.
\end{prop}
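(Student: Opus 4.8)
The plan is to treat \eqref{eq:first-bd} as the bound that helps when $H$ is sparse and \eqref{eq:second-bd} as the one that helps when $H$ is dense, and to run the same mechanism in both cases. Fix a two-colouring of $E(K_N)$ and assume there is no monochromatic $G$. Write $v$ for the deleted vertex, so $G = H + v$ with $v$ joined to $S \coloneqq N_G(v) \subseteq V(H)$. The crucial observation is that for every vertex $z$ the red neighbourhood $N_{\mathrm{red}}(z)$ must be red-$H$-free: a red copy of $H$ inside it, together with $z$ playing the role of $v$ (legitimate since $z$ is red-complete to $N_{\mathrm{red}}(z) \supseteq S$), would be a red copy of $G$; symmetrically $N_{\mathrm{blue}}(z)$ is blue-$H$-free. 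Two elementary facts convert this into quantity: (i) a vertex set that is $H$-free in \emph{both} colours has fewer than $r(H)$ vertices; and (ii) by the standard greedy embedding lemma, a graph $F$ on $\nu$ vertices with $\delta(F) \ge \nu - t$ and $\nu \ge dt + n$ contains every $n$-vertex $d$-degenerate graph, so a red-$H$-free set of $\nu \ge dt + n$ vertices always contains a vertex of blue-degree at least $t$ inside it.

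For \eqref{eq:first-bd}, put $r = r(H)$ and $N = 4dr$. Choose a vertex $z$ whose degree in some colour, say red, is at least $(N-1)/2$, and let $U = N_{\mathrm{red}}(z)$, so $\ab{U} \ge 2dr$ and $U$ is red-$H$-free. Applying (ii) to the red graph on $U$ with $t = \lfloor (\ab{U} - n)/d \rfloor$ (so that $\ab{U} \ge dt + n$) produces a vertex $z_1 \in U$ with blue-degree at least $t$ inside $U$; using $r(H) \ge n$ one checks $t \ge r$, so $W \coloneqq N_{\mathrm{blue}}(z_1) \cap U$ has at least $r$ vertices. But $W \subseteq U$ is red-$H$-free and $\ab{W} \ge r(H)$, so $K_W$ contains a monochromatic copy of $H$, which must be blue; letting $z_1$ (which is blue-complete to $W$) play the role of $v$ gives a blue copy of $G$ --- a contradiction. (The same computation in fact shows that $N = 2(d+1)r$ already suffices.)

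For \eqref{eq:second-bd} the same idea applies, but a single pass shrinks the active set by a factor of about $2d$ --- the source of the constant $4d$ --- which is wasteful when $d$ is large. Instead one iterates: pass to a majority-colour neighbourhood, record that it is $H$-free in the relevant colour, and repeat for about $\log(n/d)$ rounds, using the degeneracy at each round so that the loss per round is only polynomial in $n/d$ rather than a full factor of $d$; after $\log(n/d)$ rounds the active set either still forces a monochromatic $H$, and hence (via an apex vertex, as above) a monochromatic $G$, or is $H$-free in both colours and too small, a contradiction. Balancing the number of rounds against the per-round loss gives the factor $A \, \tfrac{n}{d} \log \tfrac{n}{d}$. (Alternatively, degeneracy $d$ forces $H$, and hence $G$, to have at least $\sim d^2$ edges, which one can feed into the Conlon, Fox, and Sudakov bound for dense graphs.) The main obstacle is exactly this quantitative bookkeeping: the greedy embedding lemma only locates $H$ inside a \emph{near-complete} host, so $H$ can never be embedded directly into a merely dense monochromatic region, and the whole argument must be routed through $r(H)$ as a black box, with the iteration set up so that the active set stays above $r(H)$ for all $\log(n/d)$ rounds while the branching stays controlled by $n/d$. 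Combining the two bounds --- using \eqref{eq:first-bd} when $d \le \sqrt{n \log n}$ and \eqref{eq:second-bd} otherwise --- yields \cref{thm:sqrt-ub}.
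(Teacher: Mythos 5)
Your argument for \eqref{eq:first-bd} is correct and is essentially the paper's: pass to a majority-colour neighbourhood $U$ of a vertex $z$ (so $U$ is red-$H$-free), apply the greedy embedding lemma (\cref{lem:greedy}) in contrapositive form to extract a vertex $z_1\in U$ of blue degree at least $(|U|-n)/d$ inside $U$, and note that $N_{\mathrm{blue}}(z_1)\cap U$, being red-$H$-free of size at least $r(H)$, contains a blue $H$ which together with $z_1$ gives a blue $G$. The paper phrases this with a vertex $w$ of maximum blue degree in $S$ and bounds $|T|\le r(H)-1$, but it is the same mechanism and your numerics check out.

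For \eqref{eq:second-bd}, however, what you give is a sketch of an iteration that does not work as described, and it is missing the two ideas the paper actually uses. The paper does not iterate: it keeps the single pair $v,w$ from the first argument and replaces the greedy lemma by the Erd\H{o}s--Szemer\'edi theorem on unbalanced colourings (\cref{lem:erdos-szemeredi}). If the maximum blue degree inside $S=N_{\mathrm{red}}(v)$ is below $\varepsilon(|S|-1)$ with $\varepsilon=cd/(n\log\frac nd)$, the colouring on $S$ is not $\varepsilon$-balanced and hence contains a monochromatic clique of order $\frac{a}{\varepsilon\log(1/\varepsilon)}\log|S|$; the second key point is that $G$ has degeneracy at least $d$, so $r(G)\ge 2^{d/2}$ (\cref{lem:exp-degen}) and therefore $|S|\ge 2^{d/3}$, making this clique of order at least $2n$, which already contains $G$ --- a contradiction. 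Otherwise $|T|\ge\varepsilon(|S|-1)\ge\varepsilon N/4$, and $|T|\le r(H)-1$ gives the bound. Your proposal contains neither the unbalanced-colouring clique bound nor the exponential-in-$d$ lower bound on $|S|$, and the iteration you describe has concrete problems: a majority-colour step loses a constant factor, and no mechanism is given for a ``loss per round only polynomial in $n/d$''; worse, nested majority neighbourhoods may produce apex vertices all of the same colour, in which case a monochromatic $H$ of the opposite colour in the final set produces no copy of $G$, and nothing in your outline rules this out (this is exactly why the first argument works so hard, via \cref{lem:greedy}, to obtain apexes of \emph{both} colours). The parenthetical alternative --- feeding $e(G)\gtrsim d^2$ into the Conlon--Fox--Sudakov dense-graph result --- would require their constant to depend on the density $p\sim d^2/n^2$ roughly like $p^{-1/2}\log(1/p)$, a quantitative form you neither state nor verify, so it cannot be taken as a substitute proof.
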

We remark that the graph $H_{k,n}$ constructed in \cref{sec:main-proof} has degeneracy $k = \Theta (\log n)$. Since $r(G_{k,n})/r(H_{k,n}) = \Omega(\log n)$, this shows that \eqref{eq:first-bd} is tight up to the constant factor for the graphs $G_{k,n}$ and $H_{k,n}$. This implies that if one were to search for stronger counterexamples to \cref{conj:cfs}, they would need to have somewhat higher degeneracy (i.e.\ $d =\omega(\log n)$).

Assuming \cref{prop:degen-balance}, \cref{thm:sqrt-ub} follows.
\begin{proof}[Proof of \cref{thm:sqrt-ub}]
	Note that for $d \geq 1$, we have
	\[
		\min \left\{ 4d, A \frac nd \log \frac nd \right\} \leq \sqrt{4d \cdot A \frac nd \log \frac nd} = \sqrt{4An \log \frac nd} \leq \left(2 \sqrt A\right) \sqrt{n \log n}.
	\]
	Setting $C=2\sqrt A$ and applying \cref{prop:degen-balance} yields \cref{thm:sqrt-ub}.
\end{proof}

To prove \cref{prop:degen-balance}, we need the following simple and well-known lemma, which shows that one can embed a graph of bounded degeneracy in a very dense graph.
\begin{lem}\label{lem:greedy}
	Let $H$ be an $n$-vertex $d$-degenerate graph, and let $R$ be an $N$-vertex graph in which every vertex has at most $(N-n)/d$ non-neighbors. Then $H$ is a subgraph of $R$.
\end{lem}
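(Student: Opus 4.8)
The plan is to do a greedy (vertex-by-vertex) embedding of $H$ into $R$, exploiting the degeneracy ordering of $H$ to keep the number of constraints at each step small.

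First I would fix an ordering $v_1,\dots,v_n$ of $V(H)$ witnessing $d$-degeneracy, so that each $v_i$ has at most $d$ neighbors among $v_1,\dots,v_{i-1}$; call these its \emph{back-neighbors}. I would then build an embedding $\phi\colon V(H)\to V(R)$ one vertex at a time, maintaining the invariant that after step $i$ the vertices $v_1,\dots,v_i$ have been mapped to distinct vertices of $R$ with the property that $\phi(v_a)\phi(v_b)\in E(R)$ whenever $v_av_b\in E(H)$ and $a,b\le i$. Since every edge of $H$ joins some vertex to one of its back-neighbors, it suffices at step $i$ to choose $\phi(v_i)$ to be a vertex of $R$ adjacent to $\phi(w)$ for every back-neighbor $w$ of $v_i$, and distinct from $\phi(v_1),\dots,\phi(v_{i-1})$.

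The key counting step is to check such a vertex always exists. The vertices of $R$ that fail to be adjacent to some $\phi(w)$ number at most $d\cdot \frac{N-n}{d}=N-n$, since $v_i$ has at most $d$ back-neighbors and each $\phi(w)$ has at most $(N-n)/d$ non-neighbors in $R$. Adding the at most $i-1\le n-1$ already-used vertices, the number of forbidden vertices is at most $(N-n)+(n-1)=N-1<N$, so a valid choice of $\phi(v_i)$ remains. Carrying this out for $i=1,\dots,n$ produces an embedding of $H$ as a subgraph of $R$.

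There is no real obstacle here — the only thing to be careful about is bookkeeping: making sure the bound on forbidden vertices simultaneously accounts for the non-neighbors of the (at most $d$) back-neighbor images and for the previously used vertices, and that the degeneracy ordering is set up so that every edge of $H$ is indeed "seen" as a back-edge at its later endpoint.
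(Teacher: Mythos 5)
Your proposal is correct and is essentially identical to the paper's proof: the same greedy embedding along a degeneracy ordering, with the same count of at most $d\cdot\frac{N-n}{d}=N-n$ vertices forbidden by non-adjacency plus at most $n-1$ already-used vertices. No issues.
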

\begin{proof}
	Let the vertices of $H$ be $v_1,\dots,v_n$, sorted so that each $v_i$ has at most $d$ neighbors $v_j$ with $j<i$. We inductively pick an embedding $\varphi:V(H) \to V(R)$, maintaining the property that $\varphi(v_1),\dots,\varphi(v_i)$ span a copy of $H[v_1,\dots,v_i]$. First, we let $\varphi(v_1)$ be an arbitrary vertex of $R$. Suppose we've defined $\varphi(v_1),\dots,\varphi(v_{i-1})$, and we wish to define $\varphi(v_i)$. Let $J$ be the set of $j<i$ with $v_j v_i \in E(H)$, so that $\ab J \leq d$. For every $j \in J$, there are at most $(N-n)/d$ non-neighbors of $\varphi(v_j)$ in $R$, and thus at most $N-n$ vertices of $R$ are non-adjacent to $\varphi(v_j)$ for some $j \in J$. Thus, there are at least $n$ vertices of $R$ that are adjacent to all $\{\varphi(v_j)\}_{j \in J}$, and fewer than $n$ of these vertices have been used in embedding $v_1,\dots,v_{i-1}$. So we pick any of the remaining candidate vertices as $\varphi(v_i)$, which maintains our inductive hypothesis. Continuing in this manner until $i=n$, we find a copy of $H$ in $R$.
\end{proof}

We will use the following result of Erd\H os and Szemer\'edi \cite{MR325446}. It shows that colorings which are not $\varepsilon$-balanced contain larger cliques than what is implied by the Ramsey number alone.
\begin{lem}[{\cite[Theorem 2]{MR325446}}]\label{lem:erdos-szemeredi}
	There exists an absolute constant $a>0$ such that the following holds for all $\varepsilon \in (0,\frac 12]$ and every positive integer $N$. Any two-coloring of $E(K_N)$ which is not $\varepsilon$-balanced contains a monochromatic clique of order $\frac{a}{\varepsilon \log \frac 1 \varepsilon} \log N$.
\end{lem}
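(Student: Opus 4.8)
The plan is to first reduce the statement to a purely graph-theoretic assertion about sparse graphs, and then to prove that assertion by induction, branching at a vertex to grow either a clique or an independent set. Since the coloring is not $\varepsilon$-balanced, one color class---say red---has edge density below $\varepsilon$. Regard the red edges as a graph $G$ on $N$ vertices, so that $G$ has fewer than $\varepsilon\binom N2$ edges. A red clique is exactly a clique of $G$, and a blue clique is exactly an independent set of $G$; hence both are monochromatic cliques, and it suffices to show that any $N$-vertex graph $G$ with fewer than $\varepsilon\binom N2$ edges contains a clique or an independent set of size $k=\frac{a}{\varepsilon\log\frac1\varepsilon}\log N$ for a suitable absolute constant $a>0$.

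To prove this I would induct on $N$, at each step selecting a vertex $v$ and branching: to build a clique I pass to the red-neighborhood $N(v)$ and seek a clique there (prepending $v$), while to build an independent set I pass to the non-neighborhood $V\setminus N[v]$ and seek an independent set there (adjoining $v$). The point of the sparsity hypothesis is the asymmetry of the two branches. A below-average-degree vertex has $|N(v)|\lesssim\varepsilon N$, so descending into a neighborhood to extend a clique contracts the ground set by a factor of order $\varepsilon$; one can therefore afford only about $\log N/\log\frac1\varepsilon$ such clique-descents before the vertices are exhausted. Between consecutive descents the graph looks locally sparse, and a Tur\'an-type bound yields an independent set of order $\tfrac1\varepsilon$. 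Accumulating an independent set of size $\approx\tfrac1\varepsilon$ across each of the $\approx\log N/\log\frac1\varepsilon$ phases gives a homogeneous set of size $\approx \frac1\varepsilon\cdot\frac{\log N}{\log\frac1\varepsilon}=\frac{\log N}{\varepsilon\log\frac1\varepsilon}$, which is the target.

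The main obstacle is that edge density is not hereditary: restricting to the neighborhood of a vertex can sharply increase the density (a disjoint union of cliques of order $\varepsilon N$ has global density $\varepsilon$, yet each neighborhood is complete). I would therefore carry the induction on the absolute edge count $m<\varepsilon N^2/2$ rather than on the density, and handle the density blow-up by a case split: if some neighborhood is so dense that the Erd\H os--Szekeres bound $r(K_s)\le 4^s$ already forces a clique of the required size, I extract it directly; otherwise the sub-instance density stays controlled and the recursion continues, with each density increase charged against the $\log\frac1\varepsilon$ budget. Quantifying how often a dense-neighborhood step can occur---equivalently, how many times the relevant density can double before reaching a constant---is exactly what supplies the $\log\frac1\varepsilon$ factor in the denominator, and I expect this bookkeeping to be the crux of the argument. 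An essentially equivalent route is to bound the largest $N$ admitting a graph with at most $\varepsilon\binom N2$ edges, no $K_s$, and no independent set of order $t$, via the off-diagonal recursion $N(s,t)\le N(s-1,t)+N(s,t-1)$ sharpened by the fact that the clique side contributes only $\lesssim\varepsilon N$ vertices, and then to optimize over $s$ and $t$.
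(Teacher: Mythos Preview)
The paper does not prove this lemma; it is quoted as a classical result of Erd\H os and Szemer\'edi and simply cited to their 1972 paper. There is therefore no in-paper proof to compare against.

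Your outline is a faithful sketch of the original argument and is essentially correct. You correctly reduce to the assertion that an $N$-vertex graph of edge density below $\varepsilon$ contains a homogeneous set of order $\Omega\bigl(\tfrac{\log N}{\varepsilon\log(1/\varepsilon)}\bigr)$, and you correctly isolate the asymmetric branching as the mechanism: a descent into a neighbourhood shrinks the ground set by a factor of order $\varepsilon$, so at most $O\bigl(\tfrac{\log N}{\log(1/\varepsilon)}\bigr)$ such descents can occur, while sparsity lets one harvest roughly $1/\varepsilon$ independent vertices per phase. You also pinpoint the only genuine difficulty, namely that edge density is not hereditary under passing to a neighbourhood, and your proposed remedy---tracking the absolute edge count (equivalently, allowing the density parameter to drift and charging each doubling against the $\log(1/\varepsilon)$ budget)---is exactly how the bookkeeping is carried out in the standard proof. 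The only part left implicit is the precise inductive invariant that makes this charging rigorous, but you explicitly flag this as the crux, and it is routine once set up.
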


Finally, we will need the following well-known lower bound on the Ramsey number of a graph of high degeneracy (see e.g.\ \cite[Section 11, Remark 2]{MR2520279}). We provide the proof for completeness.
\begin{lem}\label{lem:exp-degen}
	If $G$ is a graph with degeneracy $d\geq 1$, then $r(G)\geq {2^{d/2}}$. 
\end{lem}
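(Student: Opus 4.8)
The plan is to reduce to a Ramsey lower bound for a dense subgraph of $G$ and then run a first-moment argument. Since the degeneracy of $G$ is exactly $d$, $G$ is not $(d-1)$-degenerate, so by definition of degeneracy some subgraph $G'$ of $G$ has the property that every vertex has degree at least $d$; that is, $\delta(G')\ge d$. Because $G'$ is a subgraph of $G$, any two-coloring of $E(K_N)$ with no monochromatic copy of $G'$ also has no monochromatic copy of $G$, and hence $r(G)\ge r(G')$. It therefore suffices to prove $r(G')\ge 2^{d/2}$.

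I would then record the two elementary consequences of $\delta(G')\ge d$ that the computation needs. Writing $h=|V(G')|$ and $e=|E(G')|$, the minimum-degree condition forces $h\ge d+1$ and $2e=\sum_{v}\deg_{G'}(v)\ge dh$, so $e\ge dh/2$.

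The core of the argument is a union bound over a uniformly random two-coloring of $E(K_N)$, where $N$ is taken just below $2^{d/2}$. The number of embeddings of $G'$ into $K_N$ is at most $N^{h}$, and the image of any fixed embedding spans $e$ edges, so it is monochromatic with probability $2\cdot 2^{-e}$. Consequently the expected number of monochromatic copies of $G'$ in $K_N$ is at most $2N^{h}2^{-e}\le 2\,(N\cdot 2^{-d/2})^{h}$, using $e\ge dh/2$. Since $h\ge d+1\ge 2$, taking $N$ an appropriate amount below $2^{d/2}$ drives this expectation below $1$; hence some two-coloring of $E(K_N)$ has no monochromatic $G'$, so $r(G')>N$. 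Letting $N$ approach $2^{d/2}$ yields $r(G')\ge 2^{d/2}$, and therefore $r(G)\ge r(G')\ge 2^{d/2}$.

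I do not expect a genuine obstacle: this is the standard exponential lower bound in the degeneracy, and all steps are routine. The one place that warrants a little care is the first-moment bookkeeping --- the factor of $2$ from the two colors and the integrality of $N$ --- which must be absorbed into the choice of $N$; this is harmless because the exponent $h\ge d+1$ leaves slack to spare in the inequality $2\,(N\cdot 2^{-d/2})^{h}<1$ (alternatively one deletes a single vertex from a near-optimal coloring), and because such lower-order and floor/ceiling losses are suppressed throughout the paper. For the handful of small $d$ where even this slack is not literally enough, the statement is trivial anyway, since $r(G)\ge|V(G)|\ge d+1\ge 2^{d/2}$ whenever $d\le 5$.
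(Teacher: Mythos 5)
Your overall strategy (pass to a subgraph $G'$ with $\delta(G')\ge d$, color $K_N$ uniformly at random, and run a first-moment count with $e\ge dh/2$) is exactly the paper's. However, the step where you claim the exponent $h\ge d+1$ ``leaves slack to spare'' in $2\,(N\cdot 2^{-d/2})^{h}<1$ is wrong for large $d$. To make that inequality hold you need $N\le 2^{d/2-1/h}$, and
\[
2^{d/2}-2^{d/2-1/h}\;=\;2^{d/2}\bigl(1-2^{-1/h}\bigr)\;\approx\;\frac{(\ln 2)\,2^{d/2}}{h},
\]
which is far larger than $1$ whenever $h$ is small compared with $2^{d/2}$ --- for instance $G=K_{d+1}$ with $d$ large, where $h=d+1$. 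So ``letting $N$ approach $2^{d/2}$'' only yields $r(G')\ge\lceil 2^{d/2-1/h}\rceil$, which falls short of the stated bound $2^{d/2}$; and your small-$d$ fallback does not help, since the problem occurs precisely for large $d$. Nor is this merely an artifact of bounding copies by $N^h$ labeled embeddings: for an asymmetric $d$-regular graph on $h\ll 2^{d/2}$ vertices the true expected number of monochromatic copies at $N=\lceil 2^{d/2}\rceil-1$ is close to $2$, so no ``expectation $<1$'' argument is available at that value of $N$.

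The repair is exactly your parenthetical aside, which is what the paper actually does and which should be the main argument rather than an alternative: at $N=2^{d/2}$ the expectation is (strictly) less than $2$, hence some coloring of $E(K_N)$ has at most one monochromatic copy of $G'$; deleting a single vertex of that copy gives a coloring of $K_{N-1}$ with no monochromatic $G'$, hence none of $G$, so $r(G)\ge N=2^{d/2}$. With that step promoted to the proof proper (and the factor-of-$2$/integrality issues absorbed there, not into the choice of $N$), the argument is complete and coincides with the paper's.
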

\begin{proof}
	By the definition of degeneracy, there exists a subgraph $G_0 \subseteq G$ with minimum degree at least $d$. Let $G_0$ have $k$ vertices, so that it has at least $dk/2$ edges, and note that $k \geq 2$ since $d \geq 1$. Let $N={2^{d/2}}$, and consider a uniformly random two-coloring of $E(K_N)$. The expected number of monochromatic copies of $G_0$ is at most
	\[
		k! \binom Nk 2^{1-dk/2} < N^k 2^{1-dk/2} = 2 \left(\frac{N}{2^{d/2}}\right)^k = 2.
	\]
	Thus, there exists a two-coloring of $E(K_N)$ with fewer than $2$ monochromatic copies of $G_0$. By deleting one vertex, we obtain a coloring on $N-1$ vertices with no monochromatic copies of $G_0$, and thus, no monochromatic copies of $G$. This shows that $r(G) \geq (N-1)+1=N$, as claimed. 
\end{proof}

We are now ready to prove \cref{prop:degen-balance}.
\begin{proof}[Proof of \cref{prop:degen-balance}]
	Let $N=r(G)-1$, and fix a two-coloring of $E(K_N)$ with no monochromatic copy of $G$. Let $v$ be a vertex of $K_N$. Assume without loss of generality that $v$ has at least $(N-1)/2$ red neighbors, and let $S$ be this red neighborhood. Now consider the induced coloring on $S$. Let $w$ be a vertex of maximum blue degree in this induced coloring, and let $T$ be the blue neighborhood of $w$ in $S$. 
	Then every vertex in $T$ is adjacent to $v$ in red and to $w$ in blue, and thus $T$ cannot contain a monochromatic copy of $H$. Therefore, $\ab T \leq r(H)-1$. We now lower-bound $\ab T$ in two different ways.

	First, we claim that $\ab T > (\ab S-n)/d$. Since $w$ is a vertex of maximum blue degree, we see that if this is not the case, then every vertex in $S$ has fewer than $(\ab S-n)/d$ blue neighbors in $S$. Applying \cref{lem:greedy} with $R$ being the red graph on $S$, we find that there is a red copy of $H$ in $S$. Together with $v$, this yields a red copy of $G$, a contradiction. 
	This proves the claim, which implies that
	\[
		r(H) -1 \geq \ab T > \frac{\ab S-n}{d} \geq \frac{N-1}{2d} - \frac nd 
	\]
	and thus
	\[
		N < 2dr(H) - 2d +2n+1 \leq 2dr(H) + 2n -1 \leq 4dr(H)
	\]
	since $d\geq 1$ and since $n \leq r(H)$. Recalling that $N = r(G)-1$ yields \eqref{eq:first-bd}.

	Note that \eqref{eq:second-bd} follows from (\ref{eq:cfs-bound}) if $d \leq 9$ by choosing $A$ sufficiently large, so we henceforth assume $d \geq 10$.
	We now claim that $\ab T \geq \varepsilon (\ab S-1)$, where $\varepsilon=cd/(n\log \frac nd)$, for some sufficiently small constant $c>0$. If this is not the case, then every vertex in $S$ has blue degree less than $\varepsilon (\ab S-1)$, and thus the blue edge density in $S$ is less than $\varepsilon$. So the induced coloring on $S$ is not $\varepsilon$-balanced, and by \cref{lem:erdos-szemeredi}, we may find in $S$ a monochromatic clique of order $\frac{a}{\varepsilon \log \frac 1 \varepsilon}\log {\ab S}$. Note that, as $H \subseteq G$, the degeneracy of $G$ is at least $d$, and therefore \cref{lem:exp-degen} implies that
	\[
		\ab S \geq \frac{N-1}{2} = \frac{r(G)-2}{2} \geq \frac{2^{d/2}-2}{2} \geq 2^{d/3},
	\]
	by our assumption that $d \geq 10$. Note too that by choosing $c$ sufficiently small, we can ensure that
	\[
		\varepsilon \log \frac 1 \varepsilon = \frac{cd}{n\log \frac nd} \cdot \left(\log \frac nd + \log \frac 1c + \log \log \frac nd\right) \leq \frac{ad}{6n}.
	\]
	Therefore,
	\[
		\frac{a}{\varepsilon \log \frac 1 \varepsilon} \log {\ab S} \geq a \cdot \frac{6n}{ad} \cdot \frac d3 = 2n.
	\]
	This shows that $S$ contains a monochromatic clique of order $2n$, and thus a copy of $G$, a contradiction. This proves the claim that $\ab T \geq \varepsilon(\ab S-1)$, which implies that
	\[
		r(H)-1 \geq \ab T \geq \varepsilon (\ab S-1) \geq \frac{\varepsilon N}{4}
	\]
	and thus
	\[
		r(G) = N+1 \leq \frac 4 \varepsilon r(H) = \left(A \frac nd \log \frac nd\right) r(H)
	\]
	for a constant $A = 4/c>0$.
\end{proof}

\section{Ramsey-balanced graphs}\label{sec:balanced}
In this section we discuss Ramsey-balanced graphs and prove \cref{prop:ramsey-balanced}, which says that \cref{conj:cfs} is true for Ramsey-balanced graphs. As discussed in the Introduction, this shows that there exists a sequence of graphs $G_n$ such that in any Ramsey coloring for $G_n$, one of the color classes has density $o(1)$.

For a vertex $v$ in a two-colored complete graph, we denote by $N_R(v), N_B(v)$ the red and blue neighborhoods of $v$, respectively. The only property we need of $\varepsilon$-balanced colorings is that in any $\varepsilon$-balanced coloring, we can find vertices $v,w$ such that ${N_R(v) \cap N_B(w)}$ is large. Such a statement can be easily deduced from \cite[Lemma 2.1]{MR2455594}; however, the following result, due to Kam\v cev and M\"uyesser\footnote{This result is mentioned without proof in \cite{MR4625871}. The proof given here is due to Kam\v cev and M\"uyesser, and is included with their kind permission.}, gives a stronger quantitative bound.

\begin{lem}[Kam\v cev--M\"uyesser]\label{cor:common-nbhd}
	Let $0<\varepsilon \leq \frac 12$, and fix an $\varepsilon$-balanced coloring of $E(K_N)$ with colors red and blue. Then there exist $v,w \in V(K_N)$ such that $\ab{N_R(v)\cap N_B(w)} \geq \frac{\varepsilon}{4} (N-1)$.
\end{lem}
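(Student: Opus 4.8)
The plan is to find the pair $v, w$ by an averaging/second-moment argument over all ordered pairs of vertices. For a fixed vertex $v$, write $d_R(v) = \ab{N_R(v)}$ for its red degree. The quantity we want to lower-bound is $\max_{v,w} \ab{N_R(v) \cap N_B(w)}$, so it suffices to show that the average of $\ab{N_R(v) \cap N_B(w)}$ over a suitable distribution of pairs $(v,w)$ is at least $\frac{\varepsilon}{4}(N-1)$. The natural identity is
\[
\sum_{v,w} \ab{N_R(v) \cap N_B(w)} = \sum_{u} \ab{N_R(u)} \cdot \ab{N_B(u)} = \sum_u d_R(u)\bigl((N-1) - d_R(u)\bigr),
\]
where the middle expression counts, for each vertex $u$, the pairs $(v,w)$ with $v \in N_R(u)$ and $w \in N_B(u)$ (equivalently $u$ is a common red-neighbor of $v$ and blue-neighbor of $w$). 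The $\varepsilon$-balanced hypothesis tells us exactly that $\sum_u d_R(u) = 2 e_R \geq 2\varepsilon \binom N2 = \varepsilon N(N-1)$, and symmetrically $\sum_u (N-1-d_R(u)) = 2e_B \geq \varepsilon N(N-1)$; so on average $d_R(u)$ and its complement are both a constant fraction of $N-1$.

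First I would reduce to the case where all degrees are comparable: the obstacle is that a single vertex with $d_R(u) \approx N-1$ contributes nothing to $d_R(u)(N-1-d_R(u))$ even though it has large red degree, so the sum $\sum_u d_R(u)(N-1-d_R(u))$ is not immediately controlled by the two density bounds alone. To handle this, the cleanest route is to split the vertex set as $V = V_R \sqcup V_B$ where $V_R = \{u : d_R(u) \geq (N-1)/2\}$ and $V_B$ is the rest. By the density bound $\sum_u d_R(u) \geq \varepsilon N(N-1)$, and since each term is at most $N-1$, we get $\ab{V_R} \geq \varepsilon N / 2$ or so — more carefully, $\sum_{u \in V_R} d_R(u) \geq \sum_u d_R(u) - \sum_{u \in V_B} d_R(u) \geq \varepsilon N(N-1) - \frac{N}{2}(N-1)$, which is only useful when $\varepsilon$ is close to $\frac12$. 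A better split is to pick a threshold adapted to $\varepsilon$: let $V_R$ be the vertices with $d_R(u) \geq \frac{\varepsilon}{2}(N-1)$; then the vertices outside contribute at most $\frac{\varepsilon}{2} N(N-1)$ to the red-degree sum, so $\sum_{u \in V_R} d_R(u) \geq \frac{\varepsilon}{2} N(N-1)$, hence $\ab{V_R} \geq \frac{\varepsilon}{2} N$. Symmetrically define $V_B$ as vertices with $d_B(u) = N-1-d_R(u) \geq \frac{\varepsilon}{2}(N-1)$; then $\ab{V_B} \geq \frac{\varepsilon}{2} N$. Now any $u \in V_R \cap V_B$ would give $d_R(u)(N-1-d_R(u)) \geq \bigl(\frac{\varepsilon}{2}(N-1)\bigr)^2$; but $V_R$ and $V_B$ might be disjoint, so instead I restrict the averaging to pairs $(v,w)$ with $v \in V_R$ and $w$ arbitrary — no, the right move is to bound $\sum_{u \in V_R} d_R(u)(N-1-d_R(u))$ from below: every $u \in V_R$ has $d_R(u) \geq \frac{\varepsilon}{2}(N-1)$ by definition, and we need a lower bound on $N-1-d_R(u)$ as well, which is where $V_B$ comes in — I'd restrict to $u \in V_R \cap V_B$ only if that set is large, and otherwise argue directly.

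The cleanest version, which I expect to be the one to write up: observe $\sum_u d_R(u)(N-1-d_R(u)) = (N-1)\sum_u d_R(u) - \sum_u d_R(u)^2$. By Cauchy–Schwarz $\sum_u d_R(u)^2 \geq \frac{1}{N}\bigl(\sum_u d_R(u)\bigr)^2$, which goes the wrong way; so instead use the pointwise bound $d_R(u)^2 \leq (N-1) d_R(u)$... that gives $0$. The genuine content is that both $\sum d_R(u)$ and $\sum (N-1-d_R(u))$ are large, i.e. the "red degree sequence" is far from both extremes. So I would instead write $\sum_u d_R(u)(N-1-d_R(u)) \ge \frac{1}{N-1}\sum_u d_R(u) \cdot \sum_u(N-1-d_R(u)) \cdot (\text{a correction})$ — no. Here is the argument that works: by the rearrangement/Chebyshev-sum inequality, or more simply, for any reals $a_u \in [0, M]$ with $\sum a_u = \alpha M n$ and $\sum (M - a_u) = (1-\alpha) M n$, one has $\sum a_u(M-a_u) \geq \alpha(1-\alpha) M^2 n$ when the $a_u$ are "spread out", but in the worst case (all $a_u \in \{0, M\}$) the sum is $0$. \emph{This shows the averaging over all pairs cannot work and one must use the structure differently.} Hence the hard part, and the actual approach: fix $v$ to be a vertex of red degree $\geq \varepsilon(N-1)$ (exists since average red degree is $\geq \varepsilon(N-1)$), set $S = N_R(v)$, so $\ab S \geq \varepsilon(N-1)$. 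Inside $S$, consider the induced coloring; if some vertex $w \in V(K_N)$ has $\ab{N_B(w) \cap S} \geq \frac14 \ab S \geq \frac{\varepsilon}{4}(N-1)$ we are done, so assume every $w$ has $\ab{N_B(w) \cap S} < \frac14 \ab S$; in particular every $w \in S$ has blue-degree-within-$S$ less than $\frac14 \ab S$, so the blue density inside $S$ is $< \frac14$, meaning red density inside $S$ is $> \frac34$. Now average: there is a vertex $u \in S$ with red degree within $S$ at least $\frac34(\ab S - 1)$; but also — this still needs the complementary side. The genuinely correct and simple argument: let $v$ maximize $d_R(v)$ and $w$ maximize $d_B(w)$; then $\ab{N_R(v) \cap N_B(w)} \geq d_R(v) + d_B(w) - (N-1) - [\text{overlap terms}]$... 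I will settle this by a clean double-counting and will flag it as the main obstacle.

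The main obstacle, then, is precisely that the naive average of $\ab{N_R(v)\cap N_B(w)}$ over all ordered pairs can be as small as $0$ (take the complete red/complete blue split on two halves). The resolution must exploit that, having conditioned on a large red neighborhood $S$ of a well-chosen $v$, the blue graph inside $S$ still has some edges unless $S$ is red-complete — and if $S$ is red-complete it contains a huge red clique hence a red copy of $G$... but we have no $G$ here, only an abstract $\varepsilon$-balanced coloring. So instead: choose $v$ with $d_R(v) \geq \varepsilon(N-1)$; if also some vertex $w$ has many blue neighbors in $N_R(v)$, done; otherwise $N_R(v)$ is nearly red-complete, so pick any $u \in N_R(v)$ and note $u$'s red degree in $N_R(v)$ is $\geq \frac34(\ab{N_R(v)}-1)$, and separately, since the \emph{whole} coloring is $\varepsilon$-balanced there is a vertex $w$ with $d_B(w) \geq \varepsilon(N-1)$; a short inclusion–exclusion between $N_R(v)$ (of size $\geq \varepsilon(N-1)$, nearly red-complete) and $N_B(w)$ (of size $\geq \varepsilon(N-1)$) produces the required large set $N_R(v') \cap N_B(w)$ for a suitable $v'$. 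I expect the write-up to be a half-page once the right pair of conditioning vertices is identified; pinning down that pair is the crux.
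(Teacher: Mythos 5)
Your proposal does not contain a proof: you end by flagging the choice of the conditioning pair $(v,w)$ as an unresolved ``crux,'' and neither of the two branches you sketch closes. The pair-averaging identity you write down, $\sum_{v,w}\ab{N_R(v)\cap N_B(w)}=\sum_u \ab{N_R(u)}\,(N-1-\ab{N_R(u)})$, is in fact exactly the right starting point (it is the paper's), and your reason for abandoning it is flawed: the ``worst case'' with all red degrees in $\{0,N-1\}$ is not realizable by any $2$-coloring (a vertex red to everyone and a vertex blue to everyone would disagree on their common edge), and more importantly the genuine obstruction --- a few vertices of huge red degree killing the product --- is removed by a case split rather than by discarding the method. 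Concretely: normalize so that red is the minority color with exactly $\varepsilon\binom N2$ edges. If some $v$ has $\ab{N_R(v)}\ge\frac32\varepsilon(N-1)$, take $w$ of \emph{maximum} blue degree; then $\ab{N_B(w)}\ge(1-\varepsilon)(N-1)$, and inclusion--exclusion already gives $\ab{N_R(v)\cap N_B(w)}\ge\frac{\varepsilon}{2}(N-1)$. Otherwise every red degree is at most $s=\frac32\varepsilon(N-1)$, and convexity (the values $x\mapsto x^2$ on $[0,s]$, i.e.\ a reverse Jensen bound) gives $\sum_u\ab{N_R(u)}^2\le\frac34\varepsilon N(N-1)^2$, so the displayed sum is at least $\frac{\varepsilon}{4}N(N-1)^2$ and averaging over the $N(N-1)$ ordered pairs finishes. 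This is the missing idea: the averaging does work once the high-red-degree case is split off and the degree cap is exploited.

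Your alternative branch also fails as stated. After fixing $v$ with $\ab{N_R(v)}\ge\varepsilon(N-1)$ and assuming every $w$ has $\ab{N_B(w)\cap N_R(v)}<\frac14\ab{N_R(v)}$, you propose to intersect the nearly red-complete set $N_R(v)$ with $N_B(w)$ for some $w$ of blue degree $\ge\varepsilon(N-1)$. But for small $\varepsilon$ two sets of size $\varepsilon(N-1)$ can be disjoint, and nothing in your setup forces $N_B(w)$ to meet $N_R(v)$ (or $N_R(v')$ for $v'\in N_R(v)$) at all; the inclusion--exclusion only has content when one of the two sets has size close to $N-1$, which is precisely what the maximum-blue-degree choice under the minority-red normalization provides, and which your argument never secures.
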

In the course of the proof of \cref{cor:common-nbhd}, we will need the following simple fact about convex functions, which can be viewed as a ``reversed'' form of Jensen's inequality. Although this result appears to be well-known, we were not able to find a concise statement in the literature, and include the short proof for a completeness.
\begin{lem}[Reverse Jensen inequality]\label{lem:reverse jensen}
	Let $f:[0,s] \to \R$ be a convex function. Let $x_1,\dots,x_N \in [0,s]$ be real numbers, and let $x=\sum_i x_i$. Then
	\[
		\sum_{i=1}^N f(x_i) \leq \frac{Ns-x}{s} f(0) + \frac xs f(s).
	\]
\end{lem}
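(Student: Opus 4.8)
The plan is to reduce the inequality to the single-variable case and then sum. The key observation is that for a convex function $f$ on $[0,s]$, the graph of $f$ lies below the chord joining the endpoints $(0,f(0))$ and $(s,f(s))$. Concretely, writing any $t \in [0,s]$ as the convex combination $t = (1-\frac ts)\cdot 0 + \frac ts \cdot s$, convexity gives
\[
	f(t) \leq \left(1 - \frac ts\right) f(0) + \frac ts f(s) = \frac{s-t}{s} f(0) + \frac ts f(s).
\]
Applying this pointwise with $t = x_i$ for each $i=1,\dots,N$ and summing yields
\[
	\sum_{i=1}^N f(x_i) \leq \sum_{i=1}^N \left(\frac{s-x_i}{s} f(0) + \frac{x_i}{s} f(s)\right) = \frac{Ns - x}{s} f(0) + \frac xs f(s),
\]
where in the last step we used $\sum_i (s - x_i) = Ns - x$ and $\sum_i x_i = x$. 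This is exactly the claimed bound.

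There is essentially no obstacle here: the only point requiring (minor) care is the degenerate case $s=0$, in which the statement is vacuous or trivial since then every $x_i = 0$ and $x = 0$; one should either exclude it or note that the claimed inequality reads $Nf(0) \leq Nf(0)$ after interpreting the quotients appropriately. Otherwise the argument is a one-line consequence of the definition of convexity, and no deeper facts (continuity, differentiability, Jensen's inequality itself) are needed.
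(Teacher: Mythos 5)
Your proof is correct and is essentially identical to the paper's: both apply the defining inequality of convexity to write $f(x_i) \leq \frac{s-x_i}{s}f(0) + \frac{x_i}{s}f(s)$ for each $i$ and then sum over $i$. The remark about the degenerate case $s=0$ is a harmless extra observation not needed in the paper (there $s>0$ in the application).
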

\begin{proof}
	The definition of convexity implies that
	\[
		f(x_i) \leq \frac{s-x_i}{s}f(0) + \frac{x_i}{s} f(s)
	\]
	for every $1 \leq i \leq N$. Summing this up over all $i$ gives the claimed result.
\end{proof}
We are now ready to prove \cref{cor:common-nbhd}.
\begin{proof}[Proof of \cref{cor:common-nbhd}]
	By swapping the roles of the colors if necessary, we may assume without loss of generality that at least half the edges are colored blue. 
	We may also assume that exactly $\varepsilon \binom N2$ edges are red, and the remainder are blue; if this is not the case, then we simply replace $\varepsilon$ by a larger value, and obtain a stronger conclusion. Thus, the average red degree is $\varepsilon(N-1)$, and the average blue degree is $(1-\varepsilon)(N-1)$.

	Suppose first that there is some vertex $v$ with $\ab{N_R(v)} \geq \frac 32 \varepsilon(N-1)$. Let $w$ be a vertex of maximum blue degree, so that $\ab{N_B(w)} \geq (1-\varepsilon)(N-1)$. Then by the pigeonhole principle, we have that 
	\[
		\ab{N_R(v) \cap N_B(w)} \geq \frac \varepsilon 2 (N-1).
	\]
	Therefore we may assume that every vertex of $K_N$ has red degree at most $\frac 32 \varepsilon (N-1)$. Applying \cref{lem:reverse jensen} with $f(y)=y^2$ and $s=\frac 32 \varepsilon(N-1)$, this implies that 
	\[
		\sum_{u \in V(K_N)} \ab{N_R(u)}^2 \leq \frac{\sum_u \ab{N_R(u)}}{\frac 32 \varepsilon (N-1)}\left( \frac 32 \varepsilon (N-1)\right)^2
		= \frac{2N}{3} \left( \frac 32 \varepsilon (N-1)\right)^2 
		% = \frac 32 \varepsilon^2 N(N-1)^2 
		\leq \frac 34 \varepsilon N(N-1)^2,
	\]
	where the final inequality uses that $\varepsilon \leq \frac 12$.
	Let $X$ denote the number of copies of $K_{1,2}$ with one red and one blue edge. We have that
	\begin{align*}
		X &= \sum_{u \in V(K_N)} \ab{N_R(u)} (N-1-\ab{N_R(u)})\\&= (N-1) \sum_{u \in V(K_N)} \ab{N_R(u)} - \sum_{u \in V(K_N)} \ab{N_R(u)}^2 \\
		&\geq (N-1) \cdot \varepsilon N(N-1) - \frac 34 \varepsilon N(N-1)^2\\
		&=\frac \varepsilon 4  N(N-1)^2.
	\end{align*}
	On the other hand, we have that
	\[
		X = \sum_{v,w \in V(K_N)} \ab{N_R(v) \cap N_B(w)} = \sum_{\substack{v,w \in V(K_N)\\v \neq w}} \ab{N_R(v) \cap N_B(w)},
	\]
	where the second equality is because $N_R(v) \cap N_B(v)=\varnothing$. This implies that there exist $v,w$ such that
	\[
		\ab{N_R(v) \cap N_B(w)} \geq \frac{X}{N(N-1)} \geq \frac \varepsilon 4 (N-1).\qedhere
	\]
\end{proof}

With these preliminaries, \cref{prop:ramsey-balanced} is an easy consequence.

\begin{proof}[Proof of \cref{prop:ramsey-balanced}]
	Let $N=r(G)-1$, and fix an $\varepsilon$-balanced coloring of $E(K_N)$ with no monochromatic copy of $G$. \cref{cor:common-nbhd} yields two vertices $v,w \in V(K_N)$ with $\ab{N_R(v) \cap N_B(w)} \geq \frac{\varepsilon}{4}(N-1)$. Let $S= N_R(v) \cap N_B(w)$. We claim that $S$ contains no monochromatic copy of $H$. Indeed, if there were some red copy of $H$ in $S$, then by adding $v$ to it we would find a red copy of $G$; similarly, a blue copy of $H$ in $S$ yields a blue copy of $G$ by adding $w$. So we conclude that $\ab S \leq r(H)-1$, which implies that
	\[
		r(G) = N+1 \leq \frac{4}{\varepsilon} \ab S+2 \leq \frac{4}{\varepsilon} (r(H)-1)+2 \leq \frac{4}{\varepsilon} r(H).\qedhere
	\]
\end{proof}

To conclude this section, we remark that the results of \cref{sec:upper-bounds} can be more or less equivalently phrased in terms of Ramsey-balanced graphs. Namely, the proof of \cref{prop:degen-balance} can be used to show that if $H$ is an $n$-vertex graph of degeneracy $d\geq 1$, then $H$ is both $\Omega(\frac1d)$-Ramsey-balanced and $\Omega(d/(n\log \frac nd))$-Ramsey-balanced. Balancing these two bounds and plugging them into \cref{prop:ramsey-balanced} yields \cref{thm:sqrt-ub}.

\section{More colors}\label{sec:multicolor}
For an integer $q \geq 2$, let $r(H;q)$ denote the $q$-color Ramsey number of $H$, that is, the least $N$ so that every $q$-coloring of $E(K_N)$ contains a monochromatic copy of $H$. When the number of colors is greater than $2$, we can prove a stronger version of \cref{thm:main}, showing a polynomial gap between $r(G)$ and $r(H)$.

\begin{thm}\label{thm:multicolor}
	Fix an integer $q \geq 3$. For all sufficiently large $n$, there exists an $(n+1)$-vertex graph $G$ with Ramsey number $r(G;q)>n^{1+ \frac{3q-5}{8q\log q}-o(1)}$. However, there is a vertex of $G$ whose deletion yields a graph $H$ with Ramsey number $r(H;q) =n$. In particular, for sufficiently large $n$,
	\[
		r(G;q) \geq (r(H;q))^{1+\alpha}
	\]
	for some fixed $\alpha>0$ depending only on $q$.
\end{thm}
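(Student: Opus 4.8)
The plan is to mirror the two-color construction of \cref{sec:main-proof} at the level of graphs, tuning the parameters to the number of colors and replacing the factor-$k$ gain extracted there from the Tur\'an coloring by a gain that is \emph{exponential} in $k$, coming from the known lower bounds for multicolor clique Ramsey numbers. Given $n$, take $k$ to be the largest integer with $q^{qk}\le n$, so that $k\sim\frac{\log n}{q\log q}$; let $H=H_{k,n}$ be the clique $K_k$ together with $n-k$ isolated vertices, and let $G=G_{k,n}$ be obtained from $H$ by adding an apex. Then $G$ has $n+1$ vertices, is connected, and contains $K_{k+1}$ (the apex together with the $K_k$), while deleting the apex recovers $H$.

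First I would verify that $r(H;q)=n$, the multicolor analogue of \cref{lem:H-ub}: the inequality $r(H;q)\ge n$ is automatic since $H$ has $n$ vertices, while the standard multinomial bound gives $r(K_k;q)\le q^{qk}\le n$, so every $q$-coloring of $E(K_n)$ has a monochromatic $K_k$, which extends to a monochromatic copy of $H$ by adjoining the remaining $n-k$ vertices. Since deleting the apex of $G$ returns $H$, this also settles the vertex-deletion clause of the theorem.

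Next I would establish $r(G;q)>n\bigl(r(K_{k+1};q-1)-1\bigr)$ by generalizing the Tur\'an coloring. Put $p=q-1$ and $m=r(K_{k+1};p)-1$, and fix a $p$-coloring $\psi$ of $E(K_m)$ with no monochromatic $K_{k+1}$. Partition the vertex set of $K_N$ with $N=nm$ into $m$ blobs of size $n$; color every edge inside a blob with color $1$, and color every edge between distinct blobs $i,j$ with color $\psi(i,j)+1\in\{2,\dots,q\}$. The color-$1$ graph is a disjoint union of copies of $K_n$, which contains no copy of the connected $(n+1)$-vertex graph $G$. For each $c\in\{2,\dots,q\}$, the color-$c$ graph is the $n$-fold blow-up $F[n]$ of the color-$(c-1)$ graph $F$ of $\psi$; since a clique in a blow-up uses at most one vertex from each blob, $\omega(F[n])=\omega(F)\le k<k+1\le\omega(G)$, so $F[n]$ cannot contain $G$. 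Hence $r(G;q)>N$, as claimed. (When $q=2$, so $p=1$ and $m=k$, this is exactly \cref{lem:G-lb}.)

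Finally, I would insert a strong lower bound for $r(K_{k+1};q-1)$. For $q=3$ it is enough to use Erd\H os's classical bound $r(K_{k+1};2)\ge 2^{(k+1)/2}$; for $q\ge 4$ one invokes the best available lower bounds for multicolor clique Ramsey numbers (due to Conlon and Ferber, with later improvements), which give $r(K_{k+1};q-1)\ge 2^{(\frac{3q-5}{8}-o(1))k}$ as $k\to\infty$. Since $k\sim\frac{\log n}{q\log q}$, combining this with the previous step gives $r(G;q)>n\cdot 2^{(\frac{3q-5}{8}-o(1))k}=n^{1+\frac{3q-5}{8q\log q}-o(1)}=(r(H;q))^{1+\frac{3q-5}{8q\log q}-o(1)}$, which yields the claimed polynomial separation with $\alpha$ any fixed constant below $\frac{3q-5}{8q\log q}$ (positive since $q\ge 3$). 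The only point where the argument really differs from the two-color case is the decision to express the obstruction for the blown-up colors through the clique number rather than the chromatic number: $G$ also has chromatic number $k+1$, but a $(q-1)$-coloring of $K_m$ whose color classes are each $k$-colorable forces $m\le k^{q-1}$, giving only a $(\log n)^{O(1)}$-factor separation, whereas routing through $\omega(G)=k+1$ lets the exponential lower bounds on $r(K_{k+1};q-1)$ take effect. Consequently the substantive input is the cited multicolor Ramsey estimate, and everything else is a routine adaptation of \cref{sec:main-proof}.
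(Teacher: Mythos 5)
Your proposal is correct and follows essentially the same route as the paper: the same graphs $H_{k,n}$, $G_{k,n}$ with $k\approx \log n/(q\log q)$, the same blow-up of an extremal $(q-1)$-coloring for $K_{k+1}$ (using connectivity for the within-block color and the clique number $\omega(G)=k+1$ for the blown-up colors), and the same appeal to exponential lower bounds on $r(K_{k+1};q-1)$ of the form $2^{(\frac{3q-5}{8}-o(1))k}$. The only difference is cosmetic: the paper quotes this last bound from \cite{MR4246789} (which sharpens Conlon--Ferber) rather than attributing it to Conlon--Ferber directly, and your explicit remark contrasting the clique-number and chromatic-number obstructions is a nice clarification of a point the paper leaves implicit.
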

\begin{proof}
	Fix positive integers $k$ and $n\geq q^{qk}$, and let $H_{k,n}$ and $G_{k,n}$ be the graphs defined in \cref{sec:main-proof}.
	As in the proof of \cref{lem:H-ub}, it is easy to check that $r(H_{k,n};q) = n$, by using the well-known upper bound $r(K_k;q) \leq q^{qk}$.

	For the lower bound on $r(G_{k,n};q)$, let $m=r(K_{k+1};q-1)-1$ and $N=mn$. By the definition of $r(K_{k+1};q-1)$, there exists a coloring $\chi:E(K_m) \to [q-1]$ with no monochromatic copy of $K_{k+1}$. We partition $V(K_N)$ into $m$ blocks $S_1,\dots,S_m$, each comprising $n$ vertices. For all $i \neq j$, we color all edges between $S_i$ and $S_j$ by the color $\chi(i,j) \in [q-1]$, and we color all edges inside some part with color $q$. There is no copy of $K_{k+1}$ in any of the first $q-1$ colors, and thus no copy of $G_{k,n}$ in any of these colors. Additionally, $G_{k,n}$ is connected and has $n+1$ vertices, so there is no copy of $G_{k,n}$ in the $q$th color either. This shows that $r(G_{k,n};q)>N$.

	To conclude, we need a lower bound on $r(K_{k+1};q-1)$ that grows exponentially in $q$. Such a bound was first proved by Lefmann \cite{MR932230}, with recent improvements by Conlon--Ferber \cite{MR4186575}, the author~\cite{MR4246789}, and Sawin \cite{MR4357352}. For concreteness, we quote the bound from \cite{MR4246789}, which says that for fixed $q\geq 3$ and $k \to \infty$,
	\[
		r(K_{k+1};q-1) > 2^{\frac{3q-5}{8}k - o(k)}.
	\]
	Letting $k = \flo{\log n/(q \log q)}$, we conclude that
	\[
		r(G_{k,n};q) > (r(K_{k+1};q-1)-1)n \geq n^{1+ \frac{3q-5}{8q\log q}-o(1)}.\qedhere
	\]
\end{proof}
We remark that one can get a slightly stronger bound on $\alpha$ for $q \geq 4$ by using the main result from \cite{MR4357352}, which says that
\[
	r(K_{k+1};q-1) > 2^{(0.38796(q-3)+\frac 12)k-o(k)}.
\]
However, as the value of $\alpha$ we obtain is likely far from optimal in any case, we chose to use the somewhat simpler expression from \cite{MR4246789}.

One strange feature of \cref{thm:multicolor} is that the bound on $\alpha$ actually deteriorates as $q$ gets larger, since the expression $(3q-5)/(8q\log q)$ tends to $0$ as $q \to \infty$. However, this is really a consequence of our poor understanding of multicolor Ramsey numbers. Indeed, the best known bounds on $r(K_k;q)$ are
\[
	{cqk} \leq \log r(K_k;q) \leq {qk}\log q
\]
for some constant $c>0$. The logarithmic gap between the lower and upper bounds here appears as the factor of $\log q$ in the denominator of $\alpha$. As such, we expect that a better understanding of the asymptotics of $r(K_k;q)$ would lead to a bound on $\alpha$ which does not deteriorate as $q \to \infty$.

While the lower bound of \cref{thm:multicolor} is noticeably stronger than that of \cref{thm:main}---we obtain a polynomial rather than barely super-linear bound when $q\geq 3$---unfortunately, the upper bounds in the multicolor case are much worse. Indeed, mimicking the proof\footnote{The key observation is that, in a $q$-coloring of $K_N$, we may assume that the density of every color is at least $1/r(G;q-1)$, as otherwise we may apply Tur\'an's theorem to find a set of $r(G;q-1)$ vertices colored by only $q-1$ colors, which then contains a monochromatic copy of $G$. Applying this observation $q$ times in nested common neighborhoods, we can find a set of $N/r(G;q-1)^q$ vertices, which is complete in each of the $q$ colors to some other vertex. Hence $r(G;q)/r(G;q-1)^q < r(H;q)$, which implies the claimed bound since $r(G;q-1)^q \leq 2^{c_qn}$.} of (\ref{eq:cfs-bound}), one obtains that if $G$ is an $n$-vertex graph and $H$ is obtained from $G$ by deleting a single vertex, then
\[
	r(G;q) \leq 2^{c_q n} r(H;q)
\]
for an absolute constant $c_q>0$ depending only on $q$, for any fixed $q \geq 3$. In other words, in contrast to the $O(n)$ bound on $r(G)/r(H)$ in (\ref{eq:cfs-bound}), in the case of $q\geq 3$, the bound on $r(G;q)/r(H;q)$ is exponential in $n$. Given that $r(G;q)$ is itself at most exponential in $n$, this bound is extremely weak, and it is not clear how to meaningfully improve it.

\section{Concluding remarks}\label{sec:conclusion}
Although \cref{thm:main} disproves the original conjecture of Conlon, Fox, and Sudakov, there remain a number of other interesting open problems.

The most natural question is to close the gap between \cref{thm:main,thm:sqrt-ub}. Formally, we can define
\[
	f(n) \coloneqq \max \left\{\frac{r(G)}{r(G \setminus \{v\})} : G\text{ is an }(n+1)\text{-vertex graph and }v \in V(G)\right\},
\]
which measures how much the deletion of a single vertex can affect the Ramsey number of an $(n+1)$-vertex graph. Then \cref{thm:main,thm:sqrt-ub} imply that $\Omega(\log n) \leq f(n) \leq O(\sqrt{n\log n})$. It would be interesting to improve either bound; concretely, we make the following conjecture.
\begin{conj}
	There is some $c>0$ so that $f(n) \geq n^c$ for all sufficiently large $n$.
\end{conj}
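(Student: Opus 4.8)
The plan is to construct, for some absolute constant $c>0$ and all large $n$, an $(n+1)$-vertex graph $G$ together with a vertex $v$ such that $r(G)\geq n^{c}\,r(G\setminus v)$. It pays to first record what such an example must look like. Writing $H=G\setminus v$ and letting $d$ be the degeneracy of $H$, the bound \eqref{eq:first-bd} forces $d\geq n^{c}/4$, and then \cref{lem:exp-degen} forces $r(H)\geq 2^{d/2}$, which is already super-polynomial in $n$; thus the example is necessarily one for which \eqref{eq:first-bd} is tight up to a constant factor. Guided by the upper bound $f(n)=O(\sqrt{n\log n})$---equivalently, by balancing \eqref{eq:first-bd} against \eqref{eq:second-bd}---the natural target is a graph $H$ of polynomially large degeneracy $d$ (heuristically $d\approx\sqrt n$, the crossover point of the two bounds) whose Ramsey number is as small as its degeneracy permits, namely $r(H)=2^{\Theta(d)}$, together with an apex extension $G=H+v$ for which $r(G)=\Omega(d)\,r(H)$. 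By \cref{prop:ramsey-balanced}, an equivalent formulation of the goal is to find a graph $H$ which is not $\varepsilon$-Ramsey-balanced for any $\varepsilon\gg 1/d$: every Ramsey colouring of $H$ should have a colour class of density $O(1/d)$. The graph $H_{k,n}$ of \cref{sec:main-proof} realises exactly this picture in the degenerate case $d=\Theta(\log n)$, and the conjecture asks for a polynomial-degeneracy analogue.

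For the lower bound on $r(G)$ I would try a pattern blow-up generalising the Tur\'an colouring of \cref{lem:G-lb}. Fix a Ramsey colouring $\psi$ of $K_{r(H)-1}$ for $H$ and a graph $\Gamma$ on a vertex set of size about $d$; replace each vertex of $\Gamma$ by a block carrying a copy of $\psi$, and colour an edge between two distinct blocks red if the corresponding pair of vertices is an edge of $\Gamma$, and blue otherwise. In the resulting colouring of $K_N$ with $N\approx d\,r(H)$, a monochromatic copy of $G=H+v$ cannot lie inside a single block, since $\psi$ has no monochromatic $H$ while $G$ is connected on $|V(H)|+1$ vertices; hence such a copy must meet several blocks, partitioning the vertex set of $H$ into pieces, each piece embedded in one colour class of $\psi$ inside its block, with cross-block edges of $H$ forced onto the monochromatic complete multipartite pattern encoded by $\Gamma$ and the apex forced to be monochromatically complete to all of it. The construction succeeds provided $\Gamma$ can be chosen so that no such trans-block assembly of $G$ exists while $\Gamma$ still has roughly $d$ vertices; since $\psi$ already encodes a good two-colouring of a clique far larger than $d$, this suggests building $\Gamma$ recursively from rescaled copies of the same construction, in the spirit of how multicolour clique Ramsey lower bounds (as used in \cref{thm:multicolor}) are bootstrapped from colourings with fewer colours.

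The crux---and the step I expect to be the genuine obstacle---is selecting $H$ and $\psi$ so that this trans-block assembly really is impossible. For $H_{k,n}$ it is impossible because, once the copy of $K_k$ inside $H_{k,n}$ has been placed---which, since each block is red-complete, forces one vertex into each of the $k$ blocks---the chromatic constraint (the blue graph is only $k$-colourable) leaves the apex of $G_{k,n}$ no block to occupy; one wants an analogous but polynomially larger rigidity, namely a \emph{localised} form of Ramsey-hardness in which every copy of a large subgraph of $H$ inside a monochromatic graph is confined to boundedly many blocks. It is unclear whether graphs of polynomial degeneracy with this property exist. Worse, \cref{lem:exp-degen} already says that degeneracy $d$ buys a Ramsey number of at least $2^{d/2}$, so the crude Tur\'an-type estimate $r(G)\gtrsim d\cdot|V(H)|=dn$ is exponentially weaker than $r(H)$ itself and contributes nothing: one must genuinely start from a size-optimal $H$-avoiding colouring and amplify it by a further factor of $d$. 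Producing such a colouring---equivalently, proving that some explicit dense-ish graph is very far from Ramsey-balanced---seems to require structural information about extremal Ramsey colourings well beyond what is currently available, which is presumably why the problem remains open.
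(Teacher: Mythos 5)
This statement is one of the paper's open conjectures: the paper offers no proof of it, and your proposal does not supply one either---it is a strategy sketch whose decisive step you yourself flag as unresolved. The gap is concrete: everything hinges on producing an $n$-vertex graph $H$ of degeneracy $d=n^{\Omega(1)}$ together with an $H$-free colouring $\psi$ on roughly $r(H)$ vertices and a pattern graph $\Gamma$ on roughly $d$ blocks such that the blow-up colouring contains no monochromatic copy of $G=H+v$. No candidate $H$ is proposed, and the ``localised rigidity'' that would forbid trans-block monochromatic copies is exactly where the two-colour blow-up differs from \cref{lem:G-lb}: there the inside-block colour is a single colour, so connectivity and chromatic number suffice, whereas once each block carries the two-coloured $\psi$, a red (or blue) copy of $G$ may be assembled from red (or blue) pieces of $\psi$ in several blocks joined by the cross-block colour, and nothing in the proposal rules this out. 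So the argument reduces the conjecture to a construction problem at least as hard as the conjecture itself, as you acknowledge.

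Two smaller points. Your preliminary deductions are sound and match the paper's remark after \cref{prop:degen-balance}: a ratio $n^{c}$ forces $d\geq n^{c}/4$ by \eqref{eq:first-bd}, hence $r(H)\geq 2^{d/2}$ by \cref{lem:exp-degen}, so any example must make \eqref{eq:first-bd} essentially tight at polynomial degeneracy. But the reformulation via \cref{prop:ramsey-balanced} is misstated: that proposition concerns Ramsey colourings of $G$, not of $H$, so what it yields is only the \emph{necessary} condition that every Ramsey colouring for $G$ has a colour class of density $O(n^{-c})$; it is not an equivalent formulation, since no converse (unbalancedness of $G$'s Ramsey colourings implying a large ratio $r(G)/r(H)$) is available. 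None of this affects the verdict: the conjecture remains open, and the proposal should be read as a (reasonable) discussion of obstacles rather than a proof.
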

\noindent It would also be very interesting to improve the corresponding gap for multicolor Ramsey numbers; as discussed in \cref{sec:multicolor}, the bounds for $q \geq 3$ are very far apart.

Additionally, the structure of the counterexamples to \cref{conj:cfs} is somewhat dissatisfying, since $H_{k,n}$ consists of a small clique and an enormous number of isolated vertices. One can avoid using isolated vertices (for example, essentially the same proof\footnote{The only material difference is that now, one needs to replace \cref{lem:H-ub} by the weaker bound $r(H_{n,k}') \leq 3n$ (assuming $n \geq 4^k$). This bound is proved by repeatedly applying the Erd\H os--Szekeres bound to find disjoint monochromatic $K_k$, continuing as long as at least $n$ vertices remain. One can thus find $2n/k$ disjoint monochromatic $K_k$, half of which must have the same color, yielding a monochromatic $H_{n,k}'$.} goes through if one lets $H_{k,n}'$ be the disjoint union of $n/k$ copies of $K_k$), but there is a limit to how far one can push this using the techniques of this paper. Namely, we do not know how to prove a result like \cref{thm:main} unless the graph $H$ consists of a large number of small connected components. In particular, it seems that the following conjecture would require new ideas.
\begin{conj}
	There exists a graph $G$ and a vertex $v \in V(G)$ such that $H=G \setminus \{v\}$ is connected, and $r(G) = \omega(r(H))$. 
\end{conj}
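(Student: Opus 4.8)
It is worth first isolating what any example \emph{must} look like, since this sharply constrains the search. If $H$ is connected on $n$ vertices with $\chi(H)=k$, the Chv\'atal--Harary construction — partition $K_{(k-1)(n-1)}$ into $k-1$ cliques of order $n-1$, colour inside the parts red and between them blue — shows $r(H)\ge (k-1)(n-1)+1$, as the red components are too small to hold the connected graph $H$ and the blue graph is $(k-1)$-colourable. Since deleting a vertex decreases both $|V|$ and $\chi$ by at most one, the same lower bound for $G=H+v$ is only a constant factor larger. Hence $r(G)=\omega(r(H))$ forces $G$ to be ``super-Ramsey-bad'': its Ramsey number must exceed the Chv\'atal--Harary/Tur\'an lower bound by a super-constant factor, while $H$ stays within a super-constant factor below $r(G)$. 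Moreover, by \eqref{eq:first-bd} in \cref{prop:degen-balance} the degeneracy of $H$ must itself tend to infinity, so $H$ cannot be sparse. The plan is therefore to (i) design a connected $G$ that is super-Ramsey-bad \emph{because of} one vertex $v$, in the sense that $H=G\setminus\{v\}$ is close to Ramsey-good, and then (ii) verify both directions quantitatively.

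For the construction, the obvious first attempt is to ``connectify'' the family of \cref{sec:main-proof}, replacing the isolated vertices of $H_{k,n}$ by a sparse connected skeleton (say a path through one vertex of each of $n/k$ copies of $K_k$, which keeps $\chi=k$). This does not resolve the difficulty: once $H$ is connected with $\chi(H)=\Theta(\log n)$, the Chv\'atal--Harary bound already forces $r(H)=\Omega(n\log n)$, the same order as the Tur\'an lower bound $r(G)>n\log n$, so a super-constant gap would still require $r(G)$ to \emph{exceed} the Tur\'an bound by a super-constant factor, which no Tur\'an-type colouring witnesses. A more promising route is to take $H$ to be a connected graph on $n$ vertices whose degeneracy $d=d(n)$ grows slowly (for instance $d=\log\log n$), so that $r(H)$ is roughly $n\cdot 2^{O(d)}$ by known bounds on Ramsey numbers of degenerate graphs, and to choose the extra vertex $v$ — via a recursive or product-type gadget rooted at $v$, so that deleting $v$ collapses the gadget — so that $G=H+v$ has Ramsey number a factor $\omega(1)$ larger. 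By \eqref{eq:first-bd} the available window is only a factor $O(d)$, so a gap of size $\Theta(\sqrt{\log\log n})$, or indeed any $\omega(1)$, would already suffice.

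The hard part — and, I expect, essentially all of the difficulty — is exhibiting a two-colouring of $K_N$ with $N=\omega(r(H))$ and no monochromatic $G$. Tur\'an/Chv\'atal--Harary colourings saturate exactly at the scale $N\asymp r(H)$, and the probabilistic deletion argument of \cref{lem:exp-degen} is far too wasteful when the target is only near-linear in $n$. What one needs is a colouring in which the single vertex $v$ blocks an embedding of $G$ at a scale far beyond the scale at which it blocks $H$; the usual sources of a large Ramsey number — a large clique, or a dense bipartite piece in the spirit of $K_{t,t}$ — are no help, since each of them survives the deletion of one vertex and so cannot by itself produce a super-constant \emph{gap}. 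Overcoming this seems to demand a genuinely new, likely explicit and recursive, lower-bound construction, paired with a matching embedding lemma showing that every two-colouring of $K_{o(r(G))}$ contains a monochromatic $H$ — and it is precisely here that the techniques of the present paper run out.
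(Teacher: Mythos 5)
There is a genuine gap here, and it is total: what you have written is a (thoughtful) research plan, not a proof. The statement you were asked to prove is one of the open conjectures in the concluding remarks of the paper --- the author explicitly says it ``would require new ideas'' and offers no proof --- and your proposal likewise proves nothing. You never exhibit a graph $G$, never establish an upper bound of the form $r(H)=O(g(n))$ for a connected $H$, and never construct a two-colouring of $E(K_N)$ with $N=\omega(r(H))$ avoiding monochromatic $G$; indeed you say in so many words that this last step is where ``the techniques of the present paper run out.'' A proposal whose key step is an acknowledgement that the key step is missing cannot be assessed as a correct proof, however reasonable the surrounding discussion.

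That said, your analysis of the obstructions is accurate and matches the paper's own commentary. The Chv\'atal--Harary bound $r(H)\geq(\chi(H)-1)(n-1)+1$ for connected $H$ does show that simply ``connectifying'' $H_{k,n}$ (a path through the $n/k$ cliques, say) kills the gap, since then $r(H)=\Omega(nk)$ already meets the Tur\'an-type lower bound used for $G_{k,n}$ in \cref{sec:main-proof}; and \eqref{eq:first-bd} does force the degeneracy of $H$ to grow, so the gap one can hope for is at most $O(d)$. These observations correctly explain why the paper's method needs $H$ to have many small components, but they only delimit where a construction must live --- they do not produce one. If you want to turn this into actual progress, the concrete missing ingredient is a lower-bound colouring that is blocked by a single vertex at a scale super-constantly beyond $r(H)$ for a \emph{connected} $H$; neither Tur\'an-type colourings nor the random-deletion argument of \cref{lem:exp-degen} can supply it, and your proposal does not suggest a mechanism that could.
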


It would also be interesting to study an ``average-case'' version of this question, rather than the ``worst-case'' version considered in \cref{conj:cfs,thm:main,thm:sqrt-ub}. For a graph $G$, its \emph{deck} $D(G)$ consists of all vertex-deleted induced subgraphs of $G$ (counted with multiplicity). \cref{conj:cfs} asks whether $r(H) = \Omega(r(G))$ for all $H \in D(G)$, and \cref{thm:main} says that this is false. However, rather than \emph{all}, one can ask for \emph{most}.
\begin{conj}
	There exists an absolute constant $c>0$ so that for all graphs $G$, at least $\frac 12 \ab{D(G)}$ of the graphs $H \in D(G)$ satisfy $r(H) \geq c\cdot r(G)$.
\end{conj}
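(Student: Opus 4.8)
The plan is to revisit the ``apex trick'' behind \cref{prop:ramsey-balanced,prop:degen-balance} and to push it so that it controls \emph{many} deck members simultaneously. Fix $N=r(G)-1$ and a Ramsey coloring $\chi$ of $E(K_N)$ for $G$. The starting observation, already implicit in \cref{sec:upper-bounds}, is that for \emph{every} vertex $u$ of $K_N$ the set $N_R(u)$ contains no red copy of $G\setminus\{v\}$ for \emph{any} $v\in V(G)$: a red copy of $G\setminus\{v\}$ together with $u$ spans a red $G$, because $G$ is a subgraph of $G\setminus\{v\}$ with an apex added. The same holds for $N_B(u)$, and hence for any set $N_R(a)\cap N_B(b)$. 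Call $v\in V(G)$ \emph{bad} if $r(G\setminus\{v\})<c\cdot r(G)$; the goal is that at most half of $V(G)$ is bad. It suffices to produce, for at least $\tfrac12|V(G)|$ vertices $v$, a set $W_v\subseteq V(K_N)$ with $|W_v|\ge c\cdot r(G)$ on which $\chi$ has no monochromatic copy of $G\setminus\{v\}$. When $\chi$ is $\Omega(1)$-balanced, \cref{cor:common-nbhd} supplies a single $W=N_R(a)\cap N_B(b)$ that works for \emph{all} $v$ at once --- this is exactly \cref{prop:ramsey-balanced} --- so the difficult case is a very unbalanced $\chi$, where $\max_{a,b}|N_R(a)\cap N_B(b)|$ is small and one is forced to use one-sided neighborhoods $N_R(a)$ or $N_B(a)$; such a set automatically avoids a monochromatic $G\setminus\{v\}$ in one colour for every $v$, and the whole problem becomes killing the ``wrong-colour'' copies, which is where $v$ enters.

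Here is a further reduction. If some vertex $a$ has a majority neighborhood $S_a$ (the larger of $N_R(a),N_B(a)$) containing a vertex $b$ of minority-degree at least $\gamma|S_a|$ inside $S_a$, then $T:=S_a\cap N_{\mathrm{minority}}(b)$ has $|T|\ge\gamma(N-1)/2$ and, exactly as in the proof of \cref{prop:degen-balance}, contains no monochromatic $G\setminus\{v\}$ for any $v$; hence there are \emph{no} bad vertices. Therefore a bad vertex can only exist if, in every Ramsey coloring, every majority neighborhood $S_a$ has its minority graph of maximum degree $o(|S_a|)$, i.e.\ the majority graph on $S_a$ is quasi-complete. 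By \cref{lem:greedy} a quasi-complete graph on $|S_a|\ge(r(G)-2)/2$ vertices contains every low-degeneracy graph on $|V(G)|$ vertices, so (as it avoids $G$) the graph $G$, hence every $G\setminus\{v\}$, must have large degeneracy unless $r(G)$ is linear in $|V(G)|$, in which case the conjecture is immediate; and \eqref{eq:first-bd} then pins this degeneracy down further for bad $v$. At the same time the minority graph on $S_a$ has density $o(1)$, so \cref{lem:erdos-szemeredi} yields a monochromatic clique of super-logarithmic size inside $S_a$, which must have fewer than $|V(G)|$ vertices, bounding $r(G)$ by a subexponential function of $|V(G)|$. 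A bad $v$ is thus precisely one for which, in the (necessarily locally unbalanced) coloring $\chi$, \emph{every} majority neighborhood already contains a minority-coloured copy of $G\setminus\{v\}$, and one wants to show that only few such $v$ can coexist.

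The main obstacle is converting this local structure into an actual count of bad vertices. Morally, the largeness of $r(G)$ comes with a structural ``certificate'' --- a Tur\'an-type blow-up coloring, or a high-degeneracy/random-coloring obstruction --- and the content of the conjecture is that this certificate survives the deletion of almost every single vertex; this is exactly what happens for the graphs $G_{k,n}$ of \cref{sec:main-proof}, where only removing the apex destroys the Tur\'an obstruction. I do not see how to make this precise in general: $\chi$ lives on a different vertex set than $G$, and tracking how the minority copies of the many different graphs $G\setminus\{v\}$ are distributed inside $\chi$ seems to require a genuinely new idea --- perhaps an averaging argument over pairs $(a,b)$ of vertices of $K_N$ (in the spirit of the proof of \cref{cor:common-nbhd}) together with a sharper choice of auxiliary sets, or a stability strengthening of \cref{prop:ramsey-balanced}. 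As a stepping stone it would already be of interest to prove that at most a $(1-\delta)$-fraction of $D(G)$ is bad for some absolute $\delta>0$, or to settle the conjecture when $G$ has bounded clique number, or when every Ramsey coloring of $K_{r(G)-1}$ is close to a blow-up.
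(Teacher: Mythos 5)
The statement you are trying to prove is not a theorem of the paper at all: it is one of the open problems posed in \cref{sec:conclusion}, and the paper offers no proof of it (indeed, the author only remarks that a version of it with appropriate error control would recover the random-graph application of \cite{MR4170438}). Your write-up, to its credit, says as much at the end, but that means it is not a proof: the entire content of the conjecture is the step you identify as missing, namely converting the local structure of a Ramsey coloring $\chi$ on $K_{r(G)-1}$ into a count showing that at most half the vertices $v\in V(G)$ can be ``bad.'' The apex trick you set up (a red copy of $G\setminus\{v\}$ inside $N_R(u)$ plus $u$ gives a red $G$, and likewise in sets $N_R(a)\cap N_B(b)$) is exactly the mechanism behind \cref{prop:ramsey-balanced,prop:degen-balance}, and it is inherently uniform in $v$: whenever it works it shows \emph{no} vertex is bad, so it cannot by itself distinguish ``at least half the deck'' from ``all of the deck,'' which \cref{thm:main} shows is false. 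Any genuine proof must exploit the deleted vertex $v$ itself, and nothing in your argument does so.

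There are also concrete problems in the intermediate dichotomy. If the constant $c$ in the conjecture is to be absolute, the threshold $\gamma$ in your ``majority neighborhood'' reduction must be an absolute constant as well; the failure case then only gives that the minority graph on $S_a$ has maximum degree at most $\gamma\ab{S_a}$, i.e.\ constant (not $o(1)$) imbalance. With constant $\varepsilon$, \cref{lem:erdos-szemeredi} produces a clique of order $\Theta(\log\ab{S_a})$, not ``super-logarithmic,'' so the claimed subexponential bound on $r(G)$ does not follow; and even if it did, it says nothing about how many graphs $G\setminus\{v\}$ admit minority-colored copies inside every majority neighborhood, which is the quantity the conjecture is about. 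Similarly, the appeal to \cref{lem:greedy} requires every vertex of the majority graph on $S_a$ to have few non-neighbors \emph{within $S_a$}, which your local hypothesis gives only for the one neighborhood you examine, and the conclusion ``$G$ has large degeneracy unless $r(G)=O(\ab{V(G)})$'' again constrains $G$, not the distribution of bad vertices in its deck. In short: the reductions are in the spirit of \cref{sec:upper-bounds} but are loose in their quantifiers, and the decisive averaging-over-$v$ idea is absent, so the conjecture remains open after your attempt, exactly as it is in the paper.
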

In fact, it seems possible that $r(H) \geq c\cdot r(G)$ holds for all but $o(\ab{D(G)})$ of the graphs in $D(G)$. If this holds with appropriate control on the little-$o$, then it suffices for the original application of Conlon, Fox, and Sudakov; namely, such a result would show that $\log r(G(n,p))$ is concentrated in an interval of length $O(\sqrt n)$, by mimicking the proof of \cite[Theorem 5.6]{MR4170438}.

Another open problem concerns unbalanced colorings. Recall that the Erd\H os--Szemer\'edi theorem, \cref{lem:erdos-szemeredi}, says that unbalanced colorings contain much larger monochromatic cliques than what is implied by the Ramsey number $r(K_k)$ alone. We used this in the proof of \eqref{eq:second-bd} in \cref{prop:degen-balance}, where we found a monochromatic copy of $G$ by finding a sufficiently large monochromatic clique. This suggests a technique for strengthening \cref{prop:ramsey-balanced}, which is interesting in its own right: for graphs other than cliques, can one use the assumption of an unbalanced coloring to prove stronger Ramsey bounds?

Finally, rather than asking about vertex deletion, one could ask about edge deletion: if $H$ is obtained from $G$ by deleting a single edge, how large can $r(G)/r(H)$ be? It does not seem as though the techniques of \cref{sec:main-proof} can be used to construct examples where $r(G)/r(H)$ is super-constant, simply because the deletion of a single edge cannot split a connected graph into more than two connected components. In fact, we conjecture that there is no case where this ratio is super-constant.
\begin{conj}
	Let $G$ be a graph, and let $H$ be obtained from $G$ by deleting a single edge. Then
	\[
		r(H) \geq c \cdot r(G)
	\]
	for some absolute constant $c>0$.
\end{conj}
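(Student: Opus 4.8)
We wish to show that $r(G)\le C\cdot r(H)$ for an absolute constant $C$, where $H=G-e$; the reverse inequality $r(H)\le r(G)$ is immediate from $H\subseteq G$.

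The first thing to observe is that essentially all the machinery of \cref{sec:upper-bounds,sec:balanced} transfers from vertex deletion to edge deletion. Write $e=xy$ and let $H^{+}$ be the graph obtained from $H$ by adding one new vertex adjacent to every vertex of $H$. Since $\ab{V(G)}=\ab{V(H)}$ and $xy$ is the only edge of $G$ not present in $H$, we may embed $G$ into $H^{+}$ by sending $x$ to the new apex vertex and fixing every other vertex of $G$; hence $G\subseteq H^{+}$. Equivalently: in any two-colouring of a complete graph, a monochromatic red copy of $H$ together with any vertex that is red-complete to its vertex set spans a monochromatic red copy of $G$ --- and this is the only feature of vertex deletion that the proofs of \cref{prop:degen-balance} and \cref{prop:ramsey-balanced} actually use. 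Consequently, writing $n=\ab{V(H)}$ and letting $d$ be the degeneracy of $H$, the proof of \cref{prop:degen-balance} yields $r(G)\le 4d\cdot r(H)$ and $r(G)\le\bigl(A\tfrac nd\log\tfrac nd\bigr)r(H)$ --- so $r(G)\le C\sqrt{n\log n}\cdot r(H)$, as in \cref{thm:sqrt-ub} --- while the proof of \cref{prop:ramsey-balanced} yields $r(G)\le\tfrac4\varepsilon\,r(H)$ whenever $G$ is $\varepsilon$-Ramsey-balanced.

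By this last bound, it remains to handle $G$-free colourings that are far from balanced, and the plan is to argue that when $G=H+e$ such colourings cannot push $r(G)$ far above $r(H)$. The guiding intuition, flagged already before the conjecture, is that the only mechanism we know for forcing $r(G)/r(H)=\omega(1)$ is a Tur\'an-type colouring whose red classes are the connected components of $H$, as in \cref{sec:main-proof}; since deleting one edge raises the number of components of $G$ by at most one, in such a colouring every red component still has order at least $\ab{V(G)}/2$, forcing $r(G)/r(H)=O(1)$ regardless. I would try to prove that this ``component split'' is essentially the only obstruction: concretely, that in any $G$-free colouring of $K_{N}$ with $N=\Omega(r(H))$, after passing to a large induced subgraph, the denser of the two colours is either dense enough --- in the sense of \cref{lem:greedy}, remembering that $G$ has degeneracy at most $d+1$ --- to contain a monochromatic copy of $G$, or else has all its components of order less than $\ab{V(G)}$, in which case a common-neighbourhood argument as in \cref{prop:ramsey-balanced} should produce a large monochromatic-$H$-free set.

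The hard part will be making ``the component split is the only obstruction'' precise and proving it: this is a structural statement about the colourings realizing $r(G)$ when $G=H+e$, of a kind that seems to lie beyond the common-neighbourhood and degeneracy arguments used in this paper --- and which, for $H$ of large degeneracy, balances out at exactly the $\sqrt{n\log n}$ of \cref{thm:sqrt-ub}. A concrete test case inside the conjecture is $G=K_{n}$, $H=K_{n}-e$, i.e.\ the assertion $r(K_{n})=O(r(K_{n}-e))$: here both graphs are connected, so the component heuristic is silent, and the best known lower bounds for $r(K_{n}-e)$ are of order $2^{(1/2+o(1))n}$, just like those for $r(K_{n})$, leaving an exponential gap to the Erd\H os--Szekeres upper bound $r(K_{n})\le 4^{n}$. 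Even this special case would seem to require a genuinely new idea --- for instance, a better lower bound for Ramsey numbers of graphs of the form $K_{n}-e$, or a sharper upper bound for $r(K_{n})$ itself.
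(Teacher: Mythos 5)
There is no proof in the paper for you to have matched: this statement is the paper's final \emph{open} conjecture (stated in \cref{sec:conclusion} without proof), and your proposal does not prove it either --- it is a plan whose decisive step you yourself flag as missing. The part of your argument that is genuinely correct is the transfer observation: since $G=H+xy$ embeds into $H$ plus an apex vertex (send $x$ to the apex), a monochromatic copy of $H$ together with a vertex monochromatically complete to it yields a monochromatic copy of $G$, and this is indeed the only feature of vertex deletion used in the proofs of \cref{prop:degen-balance,prop:ramsey-balanced}. So those arguments do carry over to edge deletion and give $r(G)\le 4d\,r(H)$, $r(G)\le (A\tfrac nd\log\tfrac nd)r(H)$, hence $r(G)\le C\sqrt{n\log n}\,r(H)$, and $r(G)\le\tfrac4\varepsilon r(H)$ when $G$ is $\varepsilon$-Ramsey-balanced. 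That is a correct (and worth recording) partial result, but it is exactly the same strength as what is already known for vertex deletion, where the conjectured constant-factor bound is \emph{false}; so nothing in it uses edge deletion in an essential way, and it cannot by itself yield the conjecture.

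The gap is the step you call ``the component split is the only obstruction.'' That is an unproved structural assertion about extremal $G$-free colourings, and no argument in the paper (or in your sketch) supports it: after \cref{prop:ramsey-balanced} reduces matters to unbalanced colourings, one would need to show that an unbalanced $G$-free colouring on $\gg r(H)$ vertices forces a monochromatic $G$, and the only tool available, \cref{lem:erdos-szemeredi}, produces a clique of order $\Theta(\log N)$ --- far too small unless $G$ itself is close to a clique of logarithmic order, which is precisely the regime of the paper's counterexamples. Your own test case makes the failure concrete: for $G=K_n$, $H=K_n-e$, the proposal reduces to showing $r(K_n)=O(r(K_n-e))$, where the known lower bounds for both graphs are $2^{(1/2+o(1))n}$ against the $4^n$ upper bound, and none of the common-neighbourhood or degeneracy machinery here distinguishes the two. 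So the proposal should be read as: (i) a correct $O(\sqrt{n\log n})$ upper bound for the edge-deletion ratio, plus (ii) an honest statement that the conjecture itself remains open --- which matches the paper's own position.
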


\paragraph{Acknowledgments.} I am indebted to Jacob Fox for many valuable discussions about this paper, and especially for his help with the proof of \cref{thm:sqrt-ub}. I would also like to thank David Conlon and the anonymous referees for many helpful comments on earlier drafts of this paper. Finally, I would like to thank Mehtaab Sawhney for suggesting the particularly short proof of \cref{lem:reverse jensen}, and am grateful to Nina Kam\v cev and Alp M\"uyesser for their permission to include their proof of \cref{cor:common-nbhd}.

% \bibliographystyle{yuval}
% \bibliography{../../add_vtx}

\end{document}